\newtheorem{theorem}{{\sc Theorem}}[section]
\newtheorem{cor}[theorem]{{\sc Corollary}}
\newtheorem{lemma}[theorem]{{\sc Lemma}}
\newtheorem{assumption}[theorem]{{\sc Assumption}}
\newtheorem{definition}[theorem]{{\sc Definition}}
\theoremstyle{remark}
\newtheorem{remark}[theorem]{{\sc Remark}}
\newtheorem{example}[theorem]{{\sc Example}}
\newcommand{\E}{\mathbb E}
\newcommand{\V}{\mathbb V}
\newcommand{\R}{\mathbb R}
\newcommand{\N}{\mathbb N}
\newcommand{\Z}{\mathbb Z}
\newcommand{\re}{{\mathbb R}}    % real numbers
\newcommand{\cn}{{\mathbb C}}
\newcommand{\qn}{{\mathbb H}}    % complex numbers
\begin{document}
\title[Moment estimates of Rosenthal type via cumulants]{Moment estimates of Rosenthal type via cumulants}
\bigskip
\author[Peter Eichelsbacher, Lukas Knichel]{}

\maketitle
\thispagestyle{empty}
\vspace{0.2cm}

\centerline{\sc Peter Eichelsbacher\footnote{ Ruhr-Universit\"at Bochum, Fakult\"at f\"ur Mathematik, 
IB 2/115, D-44780 Bochum, Germany,{\tt peter.eichelsbacher@rub.de  }}, 
Lukas Knichel\footnote{Ruhr-Universit\"at Bochum, Fakult\"at f\"ur Mathematik, 
IB 2/95, D-44780 Bochum, Germany, {\tt lukas.knichel@ruhr-uni-bochum.de  } \\The second author has been supported by Deutsche Forschungsgemeinschaft via RTG 2131. %
%The first author was supportet by the international research training group 1339 of the DFG.
}}
\vspace{0.5cm}
%\centerline{({\it Dedicated to the memory of Tomasz Schreiber})}

\vspace{2 cm}

\pagenumbering{roman}
\maketitle

\pagenumbering{arabic}
\pagestyle{headings}

{ {\bf Abstract:} } The purpose of the present paper is to establish moment estimates of Rosenthal type
for a rather general class of random variables satisfying certain bounds on the cumulants. We consider
sequences of random variables which satisfy a central limit theorem and estimate the speed of convergence
of the corresponding moments to the moments of a standard normally distributed variable.
The examples of random objects we discuss include those where a dependency graphs or a weighted dependency graph encodes the dependency structure. We give applications to subgraph-counting statistics in Erd\H{o}s-R\'enyi random graphs
of type $G(n,p)$ and $G(n,m)$, crossings in uniform random pair partitions and spins in the $d$-dimensional Ising model.
Moreover, we prove moment estimates for certain statistics appearing in random matrix theory,
namely characteristic polynomials of random unitary matrices as well as the determinants of certain random matrix ensembles.
We add estimates for the $p(n)$-dimensional volume of the simplex with $p(n)+1$ points in $\R^n$ distributed according
to special distributions, since it is strongly connected to Gram matrix ensembles.
\bigskip
\bigskip

%%%%%%%%%%%%%%%%%%%%%%%%%%%%%%%%%%%%%%%%%%%%%%%%%%%%%%%%%%%%%%%%%%%%%%%%%%%%%%%
\section{Introduction and main theorem}

\subsection{Cumulants}
Since the late seventies estimations of cumulants have not only been studied
to show convergence in law, but also to investigate
a more precise asymptotic analysis of the distribution via
the rate of convergence and large deviation principles,
see e.g. \cite{SaulisStratulyavichus:1989} and references therein.
In \cite{ERS:2015} and \cite{DE2013} it has been shown how to relate these bounds to prove a
moderate deviation principle for quite a large class of random models.
This paper provides a general approach to show moment estimates via cumulants.

Let $X$ be a real-valued random variable with existing absolute moments. Then
$$
\left. \Gamma_j := \Gamma_j(X) :=(-i)^j \frac{d^j}{dt^j} \log \E\bigl[e^{i t X}\bigr] \right|_{t=0}
$$
exists for all $j\in\mathbb N$ and the term is called the {\it $j$th cumulant}
(also called semi-invariant) of $X$. Here and in the following, $\E$ denotes the expectation of the corresponding random variable and $\V$ its variance.
The method of moments results in a method of cumulants, saying that
if the distribution of $X$ is determined by its moments and $(X_n)_n$ are random variables with finite moments
such that $\Gamma_j(X_n) \to \Gamma_j(X)$ as $n \to \infty$ for every $j \geq 1$, then $(X_n)_n$ converges in distribution to $X$.
Hence if the first cumulant of $X_n$ converges to zero, the second cumulant to one,
and all cumulants of $X_n$ of order bigger than $2$ vanish, then the sequence $(X_n)_n$  satisfies a Central Limit Theorem (CLT). 
Knowing, in addition, exact bounds of the cumulants one is able to describe
the asymptotic behaviour more precisely.  Let $Z_n$ be a real-valued random
variable with mean $\E Z_n=0$ and variance $\V Z_n=1$, and
\begin{equation} \label{cum1}
|\Gamma_j(Z_n)| \leq \frac{(j!)^{1+\gamma}}{\Delta_n^{j-2}}
\end{equation}
for all $j=3,4, \ldots$, $n \geq 1$, for fixed $\gamma\geq 0$ and $\Delta>0$. 
Denoting the standard normal distribution function by
$$\Phi(x):=\frac{1}{\sqrt{2\pi}} \int_{-\infty}^x e^{-\frac{y^2}{2}}dy\,,$$
one obtains the following bound for the Kolmogorov distance
$$
\sup_{x\in\mathbb R}\bigl|P(Z_n\leq x)-\Phi(x)\bigr|
\leq c_{\gamma} \, \Delta_n^{- \frac{1}{1+2\gamma}},
$$
where $c_{\gamma}$ is a constant depending only on $\gamma$, see \cite[Lemma 2.1]{SaulisStratulyavichus:1989}.
By this result, the distribution function $F_n$ of $Z_n$ converges uniformly to $\Phi$ as $n \to \infty$.
Hence, when $x=O(1)$ we have 
\begin{equation} \label{mainratio}
\lim_{n \to \infty} \frac{1-F_n(x)}{1 -\Phi(x)} = 1.
\end{equation} 
One is interested to have -- under additional conditions -- 
such a relation in the case when $x$ depends on $n$ and tends to $\infty$ as $n \to \infty$. In particular,
one is interested in conditions for which the relation \eqref{mainratio} holds in the interval 
$0 \leq x \leq f(n)$, where $f(n)$ is a non-decreasing function such that $f(n) \to \infty$. 
If the relation hold in such an interval, we call the interval {\it a zone of normal convergence}. 
 
For i.i.d. partial sums, the classical result due to Cram\'er is that if $\E e^{t |X_1|^{1/2}} < \infty$
for some $t >0$, \eqref{mainratio} holds with $f(n) = o(n^{1/6})$.
In \cite[Chapter 2]{SaulisStratulyavichus:1989}, relations of large deviations of the type \eqref{mainratio} are proved under the
condition \eqref{cum1} on cumulants with a zone of normal convergence of size proportional to $\Delta^{\frac{1}{1 + 2 \gamma}}$, see
Lemma 2.3 in \cite{SaulisStratulyavichus:1989}. 

\subsection{Rosenthal-type inequalities}
The aim of this paper is to show that under the same type of condition on cumulants of random variables $Z_n$, moment inequalities of Rosenthal type can be deduced. For independent random variables, the Rosenthal
inequalities relate moments of order higher than 2 of partial sums of random variables to the variance of partial sums.
In \cite{Rosenthal:1970} it was proved that for $(X_k)_k$ being an independent and centered sequence of real valued random variables with finite moments of order $p$, $p \geq 2$, one obtains for every positive integer $n$ that
$$
\E \bigl( \big| \sum_{j=1}^n X_j \big|^p \bigr) \ll \sum_{j=1}^n \E \bigl( |X_j|^p \bigr) + \biggl( \sum_{j=1}^n E(X_j^2) \biggr)^{\frac p2}.
$$
Here $a_n \ll b_n$ means that there exists a numerical constant $C_p$, depending only on $p$ (and not on the underlying random variables nor on $n$),
such that $a_n \leq C_p b_n$ for all positive integers $n$. A first Rosenthal-type inequality for weakly dependent random variables was derived in 
\cite{DoukhanLouhichi:1999}. In \cite{DoukhanNeumann:2007} cumulant estimates are employed for deriving inequalities of Rosenthal type for weakly dependent random variables.
Our abstract result, Theorem \ref{thmcumulants}, is motivated by this work.  
We will prove moment estimates for a couple of statistics applying Theorem \ref{thmcumulants}. 

The following is the main result of the paper:

\begin{theorem}\label{thmcumulants}
For any $n \in {\Bbb N}$, let $Z_n$ be a centered random variable with variance one and existing
absolute moments, which satisfies
\begin{equation}\label{eqcumulants}
\bigl| \Gamma_j(Z_n) \bigr| \leq \frac{C_{j,\gamma}}{\Delta_n^{j-2}}
\quad\text{for all } j=3,4, \dots
\end{equation}
for a constant $C_{j, \gamma}$ depending on $j$ and a fixed $\gamma\geq 0$ and $\Delta_n>0$. Then for any $k = 3, 4,\ldots$ we obtain
$$
\big| \E (Z_n^k) - \E (N^k) \big|  \leq \sum_{1 \leq j \leq \lceil \frac k2 -1 \rceil } A_{j,k} \frac{1}{\Delta_n^{k -2j}},
$$
where $\lceil \cdot \rceil$ denotes the ceiling function, meaning that $\lceil \frac k2 -1 \rceil = \frac k2 -1$ if $k$ is even and $\lceil \frac k2 -1 \rceil = \frac k2 - \frac 12 $ when $k$ is odd, and
$$
A_{j,k} := \frac{1}{j!} \sum_{k_1 + \cdots + k_j =k, k_i \geq 2} C_{k_1, \gamma} \cdots C_{k_j, \gamma} \frac{k!}{k_1! \cdots k_j!},
$$
and $N$ denotes a standard normally distributed random variable. For an even $k=2l$, assuming that $\Delta_n \to \infty$ we obtain
$$
\big| \E (Z_n^{k}) - \E (N^{k}) \big|  \leq C_1(k,\gamma) 
\frac{1}{\Delta_n^2}
$$
with a constant $C_1(k, \gamma)$ only depending on $k$ and $\gamma$.
For an odd $k=2l+1$, assuming that $\Delta_n \to \infty$ we obtain
$$
\big| \E (Z_n^{k}) \big|  \leq C_2(k,\gamma) 
\frac{1}{\Delta_n}
$$
with a constant $C_2(k, \gamma)$ only depending on $k$ and $\gamma$.
\end{theorem}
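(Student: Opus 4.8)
The plan is to reduce everything to the moment–cumulant formula and then to isolate and cancel the Gaussian contribution. The starting point is the classical identity expressing moments through cumulants: differentiating the relation $\E[e^{itZ_n}] = \exp\bigl(\sum_{m\geq 1}\Gamma_m(Z_n)(it)^m/m!\bigr)$ exactly $k$ times at $t=0$ (equivalently, via Fa\`a di Bruno / Bell polynomials) gives, for each fixed $k$ and whenever the $k$-th absolute moment is finite,
\[
\E(Z_n^k) = \sum_{j=1}^{k}\frac{1}{j!}\sum_{\substack{k_1+\cdots+k_j=k\\ k_i\geq 1}}\frac{k!}{k_1!\cdots k_j!}\,\Gamma_{k_1}(Z_n)\cdots\Gamma_{k_j}(Z_n).
\]
This is a finite combinatorial identity, so no analytic convergence issue arises, and the very same expansion applies verbatim to $N$.

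Next I would exploit the normalisation. Since $Z_n$ is centred we have $\Gamma_1(Z_n)=0$, which annihilates every term containing a part of size $1$; and $\V Z_n=1$ forces $\Gamma_2(Z_n)=1$. Hence the expansion collapses to compositions into parts $k_i\geq 2$, and the number $j$ of such parts then automatically satisfies $j\leq k/2$. Applying the identical reasoning to $N$, whose only non-vanishing cumulant is $\Gamma_2(N)=1$, leaves exactly one surviving term, namely the one in which every part equals $2$. That term exists only when $k=2l$ is even (producing $\E(N^{2l})=(2l-1)!!$) and is absent for odd $k$, so that $\E(N^{2l+1})=0$.

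The decisive step is the subtraction. Because $\Gamma_2(Z_n)=\Gamma_2(N)=1$, the all-parts-equal-$2$ term is identical in both expansions and cancels in $\E(Z_n^k)-\E(N^k)$. What remains is precisely the sum over compositions into $j$ parts each $\geq 2$ with at least one part $\geq 3$, that is, with $j<k/2$; the largest such $j$ is $\lceil \tfrac k2-1\rceil$. For any such composition the hypothesis \eqref{eqcumulants} yields
\[
\bigl|\Gamma_{k_1}(Z_n)\cdots\Gamma_{k_j}(Z_n)\bigr|\leq \frac{C_{k_1,\gamma}\cdots C_{k_j,\gamma}}{\Delta_n^{(k_1-2)+\cdots+(k_j-2)}}=\frac{C_{k_1,\gamma}\cdots C_{k_j,\gamma}}{\Delta_n^{\,k-2j}},
\]
using $\sum_i(k_i-2)=k-2j$. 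Grouping the surviving terms according to the number of parts $j$ and inserting the definition of $A_{j,k}$ then gives the first claimed inequality directly.

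Finally, for the asymptotic refinements I would simply read off the dominant power of $\Delta_n$. As $\Delta_n\to\infty$ the largest contribution comes from the largest admissible $j$. For even $k=2l$ this is $j=l-1$, giving exponent $k-2j=2$, while all remaining terms carry exponents $4,6,\dots$; absorbing these into a single constant (valid once $\Delta_n\geq 1$) produces the bound $C_1(k,\gamma)\Delta_n^{-2}$. For odd $k=2l+1$ one has $\E(N^k)=0$, the largest admissible $j$ is $l$, and the corresponding exponent is $k-2j=1$, which yields $C_2(k,\gamma)\Delta_n^{-1}$. I do not expect a genuine analytic obstacle: the whole argument is finite and combinatorial, and the only point requiring real care is the bookkeeping, namely setting up the moment–cumulant expansion correctly, verifying that the Gaussian (all-parts-$2$) term cancels \emph{exactly}, and matching the resulting index range to the ceiling expression $\lceil \tfrac k2-1\rceil$.
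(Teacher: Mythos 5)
Your proposal is correct and follows essentially the same route as the paper: both rest on the moment--cumulant (Leonov--Shiryaev / Bell polynomial) expansion, use $\Gamma_1(Z_n)=0$ and $\Gamma_2(Z_n)=1$ to restrict to compositions into parts $k_i\geq 2$, identify and cancel the all-parts-equal-$2$ term as $\E(N^k)$, and bound the remaining terms via $\sum_i(k_i-2)=k-2j$. The only cosmetic difference is that the paper evaluates the $j=k/2$ summand explicitly as $\tfrac{k!}{2^{k/2}(k/2)!}$ rather than invoking the Gaussian cumulant expansion, and it reads off the leading exponents without spelling out the $\Delta_n\geq 1$ absorption step.
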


\begin{cor} \label{classical}
For any $n \in {\Bbb N}$, let $Z_n$ be a centered random variable with variance one and existing
absolute moments, which satisfies
\begin{equation} \label{eqcumulants2}
\bigl| \Gamma_j(Z_n) \bigr| \leq \frac{(j!)^{1+\gamma} \widetilde{C}_j}{\Delta_n^{j-2}}
\quad\text{for all } j=3,4, \dots
\end{equation}
Then for any $k = 3, 4,\ldots$ we obtain
$$
\big| \E (Z_n^k) - \E (N^k) \big|  \leq (k!)^{1 + \gamma} \sum_{1 \leq j \leq \lceil \frac k2 -1 \rceil} \tilde{A}_{j,k} \frac{1}{\Delta_n^{k -2j}}
$$
with
$$
\tilde{A}_{j,k} := \frac{1}{j!} \sum_{k_1 + \cdots + k_j =k, k_i \geq 2} \widetilde{C}_{k_1} \cdots \widetilde{C}_{k_j} \frac{k!}{k_1! \cdots k_j!}.
$$
\end{cor}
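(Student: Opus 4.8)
The plan is to deduce Corollary \ref{classical} directly from Theorem \ref{thmcumulants} by specializing the cumulant constants. Hypothesis \eqref{eqcumulants2} is exactly \eqref{eqcumulants} with the choice $C_{j,\gamma} = (j!)^{1+\gamma}\widetilde{C}_j$, so Theorem \ref{thmcumulants} applies verbatim and yields
$$
\big| \E(Z_n^k) - \E(N^k)\big| \leq \sum_{1\leq j\leq \lceil \frac{k}{2}-1\rceil} A_{j,k}\,\frac{1}{\Delta_n^{k-2j}},
\qquad
A_{j,k} = \frac{1}{j!}\sum_{\substack{k_1+\cdots+k_j=k\\ k_i\geq 2}} C_{k_1,\gamma}\cdots C_{k_j,\gamma}\,\frac{k!}{k_1!\cdots k_j!}.
$$
It then remains only to bound each coefficient $A_{j,k}$ by $(k!)^{1+\gamma}\tilde{A}_{j,k}$, after which the asserted estimate follows by summing over $j$ against the common weights $\Delta_n^{-(k-2j)}$.

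To carry this out I would insert $C_{k_i,\gamma} = (k_i!)^{1+\gamma}\widetilde{C}_{k_i}$ into $A_{j,k}$ and collect the factorial factors. For a fixed admissible composition $(k_1,\dots,k_j)$ with $k_1+\cdots+k_j = k$ and $k_i\geq 2$, the contributing term becomes
$$
\frac{1}{j!}\,\widetilde{C}_{k_1}\cdots\widetilde{C}_{k_j}\,(k_1!\cdots k_j!)^{1+\gamma}\,\frac{k!}{k_1!\cdots k_j!}.
$$
Writing $m := \frac{k!}{k_1!\cdots k_j!}$ for the associated multinomial coefficient, the two factorial factors combine cleanly as $(k_1!\cdots k_j!)^{1+\gamma}\,\frac{k!}{k_1!\cdots k_j!} = (k!)^{1+\gamma}\,m^{-\gamma}$, isolating the target prefactor $(k!)^{1+\gamma}$.

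The only inequality needed is the elementary bound $k_1!\cdots k_j!\leq k!$, valid whenever $k_i\geq 1$ and $k_1+\cdots+k_j=k$, which is precisely the statement that $m\geq 1$. Since $\gamma\geq 0$ this forces $m^{-\gamma}\leq 1\leq m$, and since the constants $\widetilde{C}_{k_i}$ are nonnegative I may compare the displayed term with the corresponding term of $(k!)^{1+\gamma}\tilde{A}_{j,k}$, namely $\frac{1}{j!}\widetilde{C}_{k_1}\cdots\widetilde{C}_{k_j}(k!)^{1+\gamma}m$, term by term over the compositions. This gives $A_{j,k}\leq (k!)^{1+\gamma}\tilde{A}_{j,k}$ and hence the corollary. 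I do not expect a genuine obstacle here: the content is entirely the substitution and the factorial bookkeeping, the one point deserving care being to track the direction of the inequality through the exponent $1+\gamma$ and to use both $\gamma\geq 0$ and the nonnegativity of the $\widetilde{C}_{k_i}$ so that passing from $m^{-\gamma}$ up to $m$ is legitimate.
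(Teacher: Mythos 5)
Your proposal is correct and follows essentially the same route as the paper: both arguments reduce to the elementary bound $k_1!\cdots k_j!\leq k!$ for compositions of $k$ (equivalently, that the multinomial coefficient is at least $1$), the paper applying it directly to the product $\Gamma_{k_1}(Z_n)\cdots\Gamma_{k_j}(Z_n)$ before summing and you applying it to the coefficients $A_{j,k}$ after invoking Theorem \ref{thmcumulants}. Your observation that the nonnegativity of the $\widetilde{C}_{k_i}$ is needed for the term-by-term comparison is sound, and it is automatic from \eqref{eqcumulants2} since $|\Gamma_j(Z_n)|\geq 0$ and $\Delta_n>0$.
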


\begin{remark}
In our result, the rate of convergence of moments only depends on $\Delta_n$ but {\it not} on the value $\gamma$. The value
$\gamma$ only influences the size of the constants $C_i(k,\gamma)$. This is remarkable, since under condition \eqref{eqcumulants2}
the zone of normal convergence is of size $\Delta_n^{\frac{1}{1 + 2 \gamma}}$, heavily depending on $\gamma$.
%the quality of t cumulant bounds.
\end{remark}

\begin{proof}
By our assumptions we have $\Gamma_1(Z_n)=0$ and $\Gamma_2(Z_n) =1$. We now apply a formula due to Leonov and Shiryaev \cite{LS} to express moments of order $k$ through the cumulants $\Gamma_1(Z_n), \ldots, \Gamma_k(Z_n)$:
\begin{equation} \label{LS}
\E (Z_n^k) = \sum_{j=1}^{[k/2]} \frac{1}{j!} \sum_{k_1 + \cdots + k_j =k} \frac{k!}{k_1! \cdots k_j!} \Gamma_{k_1}(Z_n) \cdots \Gamma_{k_j}(Z_n),
\end{equation}
see for example \cite[formula (1.53) on page 11]{SaulisStratulyavichus:1989}. Note that $\Gamma_1(Z_n) =0$ implies that the inner sums in \eqref{LS} can be reduced to indices such that $k_i \geq 2$ for all $i$. Let us assume that $k$ is an even number.
Now the summand with $j = \frac k2$ on the right hand side of \eqref{LS} is equal to
$$
\frac{k!}{2^{\frac k2} \bigl( \frac k2 \bigr)!} \bigl( \Gamma_2(Z_n) \bigr)^{\frac k2} = \E N^k.
$$
Now we apply \eqref{eqcumulants} and obtain
$$
\big| \Gamma_{k_1}(Z_n) \cdots \Gamma_{k_j}(Z_n) \big| \leq  
C_{k_1, \gamma} \cdots C_{k_j, \gamma} \frac{1}{\Delta_n^{k - 2j}}.
$$
With the definition of $A_{j,k}$ we obtain the result
$$
\big | \E (Z_n^k) - \E (N^k) \big| \leq  \sum_{1 \leq  j \leq \frac k2 -1} A_{j,k} \frac{1}{{\Delta_n^{k - 2j}}}.
$$
When $k$ is odd, then $\E N^k = 0$ and we have to sum all the way up to $[k/2] = k/2 - 1/2$. \\
If $k$ is even, the leading term in the bound is the summand with $j= \frac k2 -1$ yielding $\frac{1}{\Delta_n^2}$.
If $k$ is odd, the leading term in the bound is the summand with $j= \frac{k-1}{2}$ yielding $\frac{1}{\Delta_n}$.
\end{proof}

\begin{proof}[Proof of Corollary \ref{classical}]
With \eqref{eqcumulants2} we apply H\"older's inequality to the Gamma function to see that $ \bigl( k_j! \bigr)^{\frac{k}{k_j}} \leq k!$.
Hence $k_1! \cdots k_j! \leq k!^{\frac{k_1+ \cdots + k_j}{k}} = k!$. Summarizing we obtain
$$
\big| \Gamma_{k_1}(Z_n) \cdots \Gamma_{k_j}(Z_n) \big| \leq k!^{1 + \gamma}   \tilde{C}_{k_1} \cdots \tilde{C}_{k_j} 
\frac{1}{{\Delta_n^{k - 2j}}}.
$$
With \eqref{LS} the proof is the same as for Theorem \ref{thmcumulants}.
\end{proof}

\noindent
In \cite[Theorem 4]{Bahr:1965}, a first result on the convergence of moments for a partial sum of independent random variables was obtained. The results
were improved in \cite{Hall:1982} and \cite{Hall:1983}.
Results from \cite[p. 208]{BR:book} can be used to derive a rate of convergence in the classical central limit theorem for moments: let $(X_i)_i$ be an i.i.d. sequence
of random variables with zero mean and unit variance, and let $Z_n = \frac{1}{\sqrt{n}} \sum_{i=1}^n X_i$. If $0 < p < 4$ and $\E(X_1^4) < \infty$, and $X_1$ satisfies Cram\'er's continuity condition $\limsup_{t \to \infty} | \E (e^{itX_1}) | < \infty$, then Theorem 20.1 in \cite{BR:book} implies
$$
\E |Z_n|^p = \E |N|^p + c_p \frac 1n + o(n^{-1})
$$
as $n \to \infty$, where the constant $c_p$ depends only on $p$ and the first four moments of $X_1$.
\medskip

\noindent
Our Theorem \ref{thmcumulants} opens up the possibility to prove moment estimates for a wide range
of dependent random variables. Before we proceed, we start with a warm up: we consider
a partial sum of {\it independent, non-identically distributed} random variables.

\begin{theorem} \label{thmnotidentical}
Let $(X_i)_{i \geq 1}$ be a sequence of independent real-valued random variables with expectation zero and variances
$\sigma_i^2>0$, $i \geq 1$, and let us assume that there exit $\gamma\geq 0$ and $K>0$ such that for all $i \geq 1$
\begin{equation}\label{momentenbedingungen}
\bigl| \E X_i^j\bigr| \leq (j!)^{1+\gamma} K^{j-2} \sigma_i^2
\quad\text{for all } j=3,4, \dots\, .
\end{equation}
Let $Z_n:=\frac{1}{\sqrt{\sum_{i=1}^n \sigma_i^2}}\sum_{i=1}^n X_i$. Then we obtain for all $k \geq 2$
$$
\big| \E (Z_n^{2k}) - \E (N^{2k}) \big|  \leq C_1(k) 
\frac{{\displaystyle{4 \max\bigl\{K^2; \max_{1\leq i\leq n}\{\sigma_i^2\}\bigr\}}}}{\sum_{i=1}^n \sigma_i^2},
$$
and 
$$
\big| \E (Z_n^{2k+1}) \big|  \leq C_2(k) 
\frac{{\displaystyle{2 \max\bigl\{K; \max_{1\leq i\leq n}\{\sigma_i\}\bigr\}}}}{\bigl( \sum_{i=1}^n \sigma_i^2 \bigr)^{1/2}}.
$$
\end{theorem}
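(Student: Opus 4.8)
The plan is to deduce this from Theorem~\ref{thmcumulants} (in the clean form recorded in Corollary~\ref{classical}) by verifying that $Z_n$ satisfies a cumulant bound of type \eqref{eqcumulants2} with an explicit $\Delta_n$. Write $B_n^2 := \sum_{i=1}^n \sigma_i^2$, so that $Z_n = B_n^{-1}\sum_{i=1}^n X_i$. Since the $X_i$ are independent, cumulants are additive, and since the $j$th cumulant is homogeneous of degree $j$ under scaling, I obtain
$$
\Gamma_j(Z_n) = \frac{1}{B_n^{\,j}} \sum_{i=1}^n \Gamma_j(X_i).
$$
The first two cumulants come out as required: $\Gamma_1(Z_n)=0$ because each $X_i$ is centered, and $\Gamma_2(Z_n) = B_n^{-2}\sum_i \sigma_i^2 = 1$. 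It therefore remains to control $|\Gamma_j(X_i)|$ for $j\ge 3$ and to sum.

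The core of the argument is a moment-to-cumulant estimate for a single $X_i$. Set $H_i := \max\{K,\sigma_i\}$. Since $K\le H_i$ and $\sigma_i^2=\E X_i^2\le (2!)^{1+\gamma}\sigma_i^2$, the hypothesis \eqref{momentenbedingungen} can be inflated to the uniform bound $|\E X_i^j|\le (j!)^{1+\gamma}H_i^{\,j-2}\sigma_i^2$, now valid for every $j\ge 2$. I would then invoke the standard method-of-cumulants estimate, expressing $\Gamma_j(X_i)$ through its moments by the partition (Leonov--Shiryaev) formula and bounding the resulting sum over partitions into blocks of size $\ge 2$, to reach
$$
\bigl|\Gamma_j(X_i)\bigr| \le (j!)^{1+\gamma}\,(2H_i)^{\,j-2}\,\sigma_i^2, \qquad j\ge 3.
$$
The point needing care, and the step I expect to be the main obstacle, is the interaction of the two competing scales $K$ and $\sigma_i$: a partition of $\{1,\dots,j\}$ into $r$ blocks of size $\ge 2$ contributes a factor $(\sigma_i^2)^r$ against a factor $K^{\,j-2r}$, and replacing $K$ by $H_i=\max\{K,\sigma_i\}$ is precisely what collapses $K^{\,j-2r}(\sigma_i^2)^r\le \sigma_i^2\,H_i^{\,j-2}$, leaving a single power of $\sigma_i^2$. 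The factor $2$ in $2H_i$ is exactly the source of the constants $4$ and $2$ in the final bounds.

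With this estimate established I sum over $i$, using $H_i\le M:=\max\{K,\max_{1\le i\le n}\sigma_i\}$ and $\sum_i\sigma_i^2=B_n^2$, to get
$$
\bigl|\Gamma_j(Z_n)\bigr|\le \frac{1}{B_n^{\,j}}\sum_{i=1}^n (j!)^{1+\gamma}(2M)^{\,j-2}\sigma_i^2 = \frac{(j!)^{1+\gamma}}{\Delta_n^{\,j-2}}, \qquad \Delta_n := \frac{B_n}{2M}=\frac{\bigl(\sum_{i=1}^n\sigma_i^2\bigr)^{1/2}}{2\max\{K,\max_i\sigma_i\}}.
$$
This is exactly condition \eqref{eqcumulants2} with $\widetilde C_j=1$, so Corollary~\ref{classical} applies. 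In the regime $\Delta_n\to\infty$ (equivalently $\sum_i\sigma_i^2$ large compared to $M^2$), the clean-bound conclusion of Theorem~\ref{thmcumulants} gives, for even order $2k$, a bound $C_1(k)\,\Delta_n^{-2}=C_1(k)\,4M^2/B_n^2$, which is the asserted inequality once $4$ and $M^2=\max\{K^2,\max_i\sigma_i^2\}$ are written out; for odd order $2k+1$ it gives $C_2(k)\,\Delta_n^{-1}=C_2(k)\,2M/B_n$, again matching the statement. The lower-order terms $\Delta_n^{-(2k-2j)}$ are dominated by the leading one because $\Delta_n\ge 1$ in this regime, so they are absorbed into the constants $C_1(k),C_2(k)$.
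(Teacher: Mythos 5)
Your proposal is correct and follows essentially the same route as the paper: pass from the moment bound \eqref{momentenbedingungen} to the single-variable cumulant bound $|\Gamma_j(X_i)|\leq (j!)^{1+\gamma}(2\max\{K,\sigma_i\})^{j-2}\sigma_i^2$, use additivity and $j$-homogeneity of cumulants under independence and scaling to get $\Delta_n = \bigl(\sum_i\sigma_i^2\bigr)^{1/2}\big/\bigl(2\max\{K,\max_i\sigma_i\}\bigr)$, and conclude via Theorem \ref{thmcumulants}. The only difference is that the paper outsources the moment-to-cumulant step to \cite[Theorem 3.1]{SaulisStratulyavichus:1989} rather than re-deriving it from the partition formula as you sketch.
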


\noindent
Remark that condition \eqref{momentenbedingungen} is a generalization of the classical Bernstein condition ($\gamma =0$).
\begin{proof}
Using a relation between moments and cumulants, condition \eqref{momentenbedingungen} implies that the $j$-th cumulant of $X_i$ 
can be bounded by $(j!)^{1 + \gamma} (2 \max \{K, \sigma_i \})^{j-2} \sigma_i^2$. Hence it follows from the independence of the random variables $X_i$, $i \geq 1$,
that the $j$-th cumulant of $Z_n$ has the bound
\begin{equation}\label{cumulantenbedinungen}
|\Gamma_j(Z_n)| \leq (j!)^{1+\gamma} \left(
\frac{\displaystyle 2 \max\bigl\{K; \max_{1\leq i\leq n}\{\sigma_i\}\bigr\}}{\sqrt{\sum_{i=1}^n \sigma_i^2}}\right)^{j-2} 
\, ,
\end{equation}
for details see for example \cite[Theorem 3.1]{SaulisStratulyavichus:1989}.
Thus for $Z_n$, the condition of Theorem \ref{thmcumulants} holds with
$$
\Delta_n=\frac{\sqrt{\sum_{i=1}^n \sigma_i^2}}{\displaystyle{2 \max\bigl\{K; \max_{1\leq i\leq n}\{\sigma_i\}\bigr\}}}\,.
$$
The result follows from Theorem \ref{thmcumulants}.
\end{proof}

\begin{remark}
If Cram\'er's condition holds, that is there
exists $\lambda>0$ such that $\E e^{\lambda |X_i|} < \infty$
holds for all $i\in\mathbb N$, then $X_i$ satisfies Bernstein's condition, which is the bound \eqref{momentenbedingungen} 
with $\gamma=0$, see for example \cite[Remark 3.6.1]{Yurinsky:1995}.
This implies \eqref{cumulantenbedinungen} and we can apply Theorem \ref{thmcumulants}
as above. Therefore Theorem \ref{thmcumulants} requires less restrictions on the random sequence
than Cram\'er's condition.
\end{remark}

The paper is organized as follows. Section 2 is devoted to applications for models where a {\it dependency graph} encodes the dependency structure in a family of random variables. Examples include counting statistics of subgraphs in Erd\H{o}s-R\'enyi random graphs $G(n,p_n)$.  In Section 3 models are considered, where edges of a corresponding dependency graph have
a weight called {\it weighted dependency graph}. We analyse the number of crossings in a random pair partition,
subgraph count statistics in the Erd\H{o}s-R\'enyi graph $G(n,m_n)$, as well as the mean number of spins in the $d$-dimensional
Ising model. Section 4 is devoted to $U$-statistics, whereas in Section 5 and 6, our
Theorem \ref{thmcumulants}  will be applied in random matrix theory and in geometric probability. In Section 6 we 
derive cumulant bounds for the logarithm of the determinant of a large class of random matrix ensembles. Our examples present
the possible variety of rates of convergences like in the central limit theorems. The difference $ \big| \E(Z_n^k) - \E (N^k) \big|$
converges to zero with a speed like $1/n$, $1/ (n^2)$, $1/ (n^3)$, $1/(2n+1)^d$, with $d\geq 1$ fixed, $1/ (\log n)$ and $1/( n \, p(n))$ for certain $p(n)$.
%in random matrix theory.

\section{Uniform control on cumulants and Dependency graphs} 

Let us start with the definition of a dependency graph due to \cite{Janson:1988}:

\begin{definition} \label{defDG}
Let ${\{X_{\alpha}\}}_{\alpha\in \mathcal I}$ be a family of random variables defined
on a common probability space. A {\it dependency graph} for ${\{X_{\alpha}\}}_{\alpha\in \mathcal I}$
is any graph $L$ with vertex set $\mathcal I$ which satisfies the following condition:
For any two disjoint subsets of vertices $V_1$ and $V_2$ such that there is no edge from any vertex in
$V_1$ to any vertex in  $V_2$, the corresponding collections of random
variables $\{X_{\alpha}\}_{\alpha\in V_1}$ and $\{X_{\alpha}\}_{\alpha\in V_2}$ are independent, see \cite{Janson:1988}.
\end{definition}

Let the {\it maximal degree} of a dependency graph $L$ be the maximum of the
number of edges coinciding at one vertex of $L$. The idea behind the usefulness of dependency graphs is that if the maximal degree
is not too large, one expects a Central Limit Theorem for the partial sums of the family  ${\{X_{\alpha}\}}_{\alpha\in \mathcal I}$.

\begin{example} \label{stexample}
A standard situation is that there is an underlying family of independent random variables $\{Y_i\}_{i \in \mathcal A}$, and each $X_{\alpha}$
is a function of the variables $\{Y_i\}_{i \in \mathcal A_{\alpha}}$, for some $\mathcal A_{\alpha} \subset \mathcal A$. With $\mathcal S = \{ \mathcal A_{\alpha}: 
\alpha \in \mathcal I \}$, the graph $L=L(\mathcal S)$ with vertex set $\mathcal I$ and edge set $\{ \alpha \beta: A_{\alpha} \cap A_{\beta} \not= \emptyset \}$
is a dependency graph for the family ${\{X_{\alpha}\}}_{\alpha\in \mathcal I}$. As a special case of this example, we will consider subgraphs
in the Erd\H{o}s-R\'enyi random graph model $G(n,p_n)$, that is $G$ has vertex set $\{1, \ldots, n\}$, and it has an edge between $i$ and $j$ with probability $p_n$, all these events being independent from each other. Let $\mathcal I$ be the set of 3-element subsets of $\{1, \ldots, n\}$, and if $\alpha = \{ i,j,k \} \in \mathcal I$, let $X_{\alpha}$ be the indicator function of the event {\it the graph $G$ contains the triangle with vertices $i,j$ and $k$}. Let $L$ be the graph with vertex set $\mathcal I$
and the following edge set: $\alpha$ and $\beta$ are linked if $|\alpha \cap \beta| =2$ (that is, if the corresponding triangles share an edge in $G$). Then $L$ is a dependency graph for the family $\{X_{\alpha} \}_{\alpha \in \mathcal I}$. 
\end{example}

Dependency graphs are used in geometric random graphs, see \cite{Penrose:2003}, and in geometric probability for statistics like the nearest-neighbour graph, the Delaunay triangulations
and the Voronoi diagramm of random point configurations, see \cite{PenroseYukich:2005}. More recently is has been used to prove asymptotic normality of pattern counts in random permutations in \cite{HitczenkoJanson:2010}.
Another context, outside the scope of the present paper, in which dependency graphs are used is the Lov\'asz Local Lemma,
see \cite{AlonSpencer08}. 

\noindent
We will consider the following setting: 

\begin{assumption}[Dependency-graph model] \label{dep-model}
From now on we consider the following model: Suppose that for each $n$, $\{X_{n,i}, 1 \leq i \leq N_n \}$ is a family of bounded random variables, $|X_{n,i}| \leq A_n$ a.s.
Suppose, in addition, that $L_n$ is a dependency graph for this family and let $D_n -1$ be the maximal degree of $L_n$. Let $Y_n := \sum_{i=1}^{N_n} X_{n,i}$ and
$\sigma_n^2 := \V(Y_n)$.
\end{assumption}

Precise normality criteria  for $(Y_n)_n$  using dependency graphs have been given in \cite{Janson:1988}, \cite{BaldiRinott:1989} and \cite{Mikhailov:1991}. In \cite{Janson:1988} the following normality criterion was proved: assume that there exists an integer $s$ such that $\bigl( \frac{N_n}{D_n} \bigr)^{\frac 1s} \frac{D_n}{\sigma_n} A_n \to 0 $ as $n \to \infty$. Then for the dependency graph model in \ref{dep-model},
$\frac{X_n - \E X_n}{\sigma_n}$ converges in distribution to a standard normally distributed random variable.

\begin{example}
We consider the $G(n,p_n)$-model in Example \ref{stexample} and take $Y_n$ to be the number of triangles.
Let $p_n$ be bounded away from 1. One has $N_n \asymp n^3$, $D_n \asymp n$ and $M_n$ =1. Since $\sigma_n^2 \asymp \max( n^3 p_n^3, n^4 p_n^5)$ (see \cite[Lemma 3.5]{JLR:2000}), the criterion is fulfilled if $p_n  \gg n^{- 1/3 + \varepsilon}$ for some $\varepsilon >0$. The asymptotic normality is in fact true under the less restrictive hypothesis $p_n \gg n^{-1}$, see \cite{Rucinski:1988}.
\end{example}

A uniform control on cumulants of $(Y_n)_n$ from Assumption \ref{dep-model} was first considered in \cite{Janson:1988}:
Under Assumption \ref{dep-model} one has that
\begin{equation} \label{eqcumulantRG}
\big| \Gamma_j(Y_n) \big|  \leq C_j N_n D_n^{j-1} A^j
\end{equation}
for some universal constant $C_j$ and any $j \geq 3$. Here it is assumed that $|X_{n,i}| \leq A$ for all $i$ and $n$, a.s.
In \cite{DE2013} is was proved that one can take $C_j = (2 e)^j (j!)^3$. The results was improved in \cite[Theorem 9.1.7]{modbook}: one can take
$C_j = 2^{j-1} j^{j-2}$ giving uniform bounds on cumulants.

\begin{definition} \label{defDNA}
A sequence $(Y_n)_n$ of real valued random variables admits a uniform control on cumulants with DNA $(D_n, N_n, A)$, if
$D_n = o(N_n)$, $N_n \to \infty$ as $n \to \infty$ and for all $j \geq 2$
\begin{equation} \label{DNA}
\big| \Gamma_j(Y_n) \big| \leq C_j N_n D_n^{j-1} A^j. 
\end{equation}
Here $A$ is a constant and $C_j$ is a constant only depending on $j$.
\end{definition}

\begin{remark}
The setting of Assumption \ref{dep-model} is an example for a uniform control on cumulants with DNA, see \eqref{eqcumulantRG}.
\end{remark}

\begin{theorem} \label{DNA-bound}
Assume that a sequence $(Y_n)_n$ of real valued random variables admits a uniform control on cumulants with DNA $(D_n, N_n, A)$. Assume moreover that
\begin{equation} \label{addcond}
\lim_{n \to \infty} \frac{\Gamma_2(Y_n)}{N_n D_n} = \sigma^2.
\end{equation}
Consider
$Z_n := \frac{Y_n}{\sigma_n}$ with $\sigma_n^2 := \V (Y_n)$. Then we obtain for even $k$ that
$$
\big| \E (Z_n^{k}) - \E (N^{k}) \big|  \leq C_1(k)  \frac{D_n}{N_n}.
$$
%For odd $k$ we have
%$$
%\big| \E (Z_n^{k}) \big|  \leq C_2(k)  \biggl( \frac{D_n}{N_n} \biggr)^{1/2}.
%$$

\end{theorem}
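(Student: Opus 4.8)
The plan is to reduce Theorem \ref{DNA-bound} to Theorem \ref{thmcumulants} by verifying that the normalized variable $Z_n = Y_n/\sigma_n$ satisfies the cumulant condition \eqref{eqcumulants} with an appropriate choice of $\Delta_n$. First I would identify the correct scale: since $\sigma_n^2 = \V(Y_n) = \Gamma_2(Y_n)$ and the hypothesis \eqref{addcond} gives $\sigma_n^2 \sim \sigma^2 N_n D_n$, the natural candidate is $\Delta_n \asymp \sqrt{N_n/D_n}$, so that $D_n/N_n \asymp 1/\Delta_n^2$, which is precisely the rate appearing in the conclusion.

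The key computation is to bound the cumulants of the centered-and-scaled variable. By homogeneity of cumulants, $\Gamma_j(Z_n) = \sigma_n^{-j} \Gamma_j(Y_n)$ for $j \geq 1$ (cumulants of order $\geq 2$ are translation invariant, so centering does not matter). Using the DNA bound \eqref{DNA}, I would estimate
$$
\bigl| \Gamma_j(Z_n) \bigr| \leq \frac{C_j N_n D_n^{j-1} A^j}{\sigma_n^j}.
$$
Then I would substitute $\sigma_n^2 \sim \sigma^2 N_n D_n$ and simplify: writing $\sigma_n^j = \sigma_n^2 \cdot \sigma_n^{j-2}$ and using $\sigma_n^2 \asymp N_n D_n$, the factors $N_n$ and one factor $D_n$ cancel, leaving a bound of the form
$$
\bigl| \Gamma_j(Z_n) \bigr| \leq \frac{C_j' A^j}{\sigma^{j-2}} \Bigl( \frac{D_n}{N_n} \Bigr)^{(j-2)/2} = \frac{\widetilde{C}_j}{\Delta_n^{j-2}},
$$
with $\Delta_n := \sqrt{N_n/D_n}$ up to the constant $\sigma^2$. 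This shows the hypothesis \eqref{eqcumulants} of Theorem \ref{thmcumulants} holds with constants $C_{j,\gamma}$ depending only on $j$ (through $C_j'$, $A$, and $\sigma$) and a parameter $\gamma$ absorbed into those constants. Since $D_n = o(N_n)$ by Definition \ref{defDNA}, we have $\Delta_n \to \infty$, so the even-$k$ conclusion of Theorem \ref{thmcumulants} applies and yields
$$
\bigl| \E(Z_n^k) - \E(N^k) \bigr| \leq C_1(k,\gamma)\, \frac{1}{\Delta_n^2} = C_1(k)\, \frac{D_n}{N_n}.
$$

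The main obstacle is handling the asymptotic nature of \eqref{addcond}: it only gives $\sigma_n^2 \sim \sigma^2 N_n D_n$ in the limit, not an exact identity, so the substitution above is valid only up to multiplicative constants and only for $n$ large enough that $\sigma_n^2$ is within a fixed factor of $\sigma^2 N_n D_n$. I would make this rigorous by fixing $n_0$ such that $\tfrac12 \sigma^2 N_n D_n \leq \sigma_n^2 \leq 2\sigma^2 N_n D_n$ for all $n \geq n_0$, absorbing the resulting factors $2^{j/2}$ into the constants $\widetilde{C}_j$, and noting that the finitely many terms $n < n_0$ affect only the constant $C_1(k)$ (since each is a bounded quantity). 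A secondary technical point is confirming that the normalization variable is genuinely centered: $Z_n = Y_n/\sigma_n$ as written need not have mean zero, but because cumulants of order $\geq 2$ are shift-invariant and Theorem \ref{thmcumulants} only constrains $\Gamma_j$ for $j \geq 3$ while requiring $\E Z_n = 0$ and $\V Z_n = 1$, I would apply the theorem to the centered version $(Y_n - \E Y_n)/\sigma_n$, whose even moments agree with those of $Z_n$ only after recentering; the cleanest route is to assume (as is standard in this dependency-graph setting, and consistent with the variance normalization $\V Z_n = 1$) that $Y_n$ is already centered, so that $Z_n$ satisfies both moment conditions of Theorem \ref{thmcumulants} directly.
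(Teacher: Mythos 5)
Your proposal is correct and follows essentially the same route as the paper's proof: use homogeneity of cumulants together with the DNA bound and $\sigma_n^2 \asymp N_n D_n$ to verify \eqref{eqcumulants} for $Z_n$ with $\Delta_n \asymp \sqrt{N_n/D_n}$, then invoke the even-$k$ case of Theorem \ref{thmcumulants}. Your additional care about the merely asymptotic nature of \eqref{addcond} and about the centering of $Z_n$ only makes explicit what the paper's terser argument leaves implicit.
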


\begin{proof}
By assumption the cumulant bounds are of the form in Theorem \ref{thmcumulants} with $\gamma=0$, $C_{j, 0} = C_j A^j$
and with
$$
\Delta_n^{j-2} = \frac{\sigma_n^j}{N_n D_n^{j-1}}.
$$
Hence we have  $\Delta_n^2 = \biggl( \frac{\sigma_n^j}{N_n D_n^{j-1}} \biggr)^{\frac{2}{j-2}}$, which is depending on $j$. 
But with $\sigma_n^2 \asymp N_n D_n$ by  assumption \eqref{addcond} we have $\Delta_n^2 \asymp \frac{N_n}{D_n}$.
Now we can apply Theorem \ref{thmcumulants}.
\end{proof}

\begin{example}[Number of triangles in Erd\H{o}s-R\'enyi random graphs]
In the model of Example \ref{stexample} we take $p \in(0,1)$ being fixed. With $\sigma_n^2 \asymp \max( n^3 p_n^3, n^4 p_n^5)$,
we obtain $\sigma_n^2 \asymp n^4$. With  $N_n \asymp n^3$ and $D_n \asymp n$ we obtain that condition \eqref{addcond}
holds. Hence we can apply Theorem \ref{DNA-bound}: for even $k$ we have
$$
\big| \E (Z_n^{k}) - \E (N^{k}) \big|  \leq C_1(k)  \frac{1}{n^2}.
$$
%and
%$$
%\big| \E (Z_n^{k}) \big|  \leq C_2(k)  \frac{1}{n}
%$$
%for $k$ being odd.
\end{example}

\begin{example}[Number of subgraphs in Erd\H{o}s-R\'enyi random graphs]
Now we like to count the number of subgraphs isomorphic to a fixed graph $H$ with $k$ edges and $l$ vertices.
As a special case of Example \ref{stexample}, let $\{H_{\alpha}\}_{\alpha \in \mathcal I}$ be given subgraphs of the complete graph $K_n$ and let $I_{\alpha}$
be the indicator that $H_{\alpha}$ appears as a subgraph in $G(n,p_n)$, that is $I_{\alpha} = 1_{\{ H_{\alpha} \subset {\Bbb G}(n,p) \}}$, $\alpha \in \mathcal I$.
Then $L(S)$ with $S = \{e_{H_{\alpha}} : \alpha \in \mathcal I\}$ is a dependency graph with edge set 
$\{ \alpha \, \beta: e_{H_{\alpha}} \cap e_{H_{\beta}} \not= \emptyset \}$.
Here we take the family of subgraphs of $K_n$ that are isomorphic to a fixed graph $H$, denoted by $\{G_{\alpha}\}_{\alpha \in 
\mathcal I}$. 
Consider $X_{\alpha} = I_{\alpha} - {\Bbb E} I_{\alpha}$ and define the graph $L_n$ by connecting every pair of indices $\alpha$ and $\beta$ such that the corresponding
graphs $G_{\alpha}$ and $G_{\beta}$ have a common edge. This is evidently a dependency graph for $(X_{\alpha})_{\alpha \in A_n}$, see \cite[Example 6.19]{JLR:2000}.
The subgraph count statistic $Y$ is the sum of all $X_{\alpha}$. We prevent the dependence on $|\mathcal I|$ in our notion.
Again we only consider a fixed $p \in (0,1)$ to guarantee condition \eqref{addcond}: notice that for $p$ being fixed we have
\begin{equation} \label{variancebound}
\text{const.} \, n^{2l-2} p^{2k-1} (1-p)
\leq \V Y \leq \text{const.} \, n^{2l-2} p^{2k-1} (1-p) 
\end{equation}
by \cite[2nd section, page 5]{Rucinski:1988}. 
Moreover we have 
$$
D_n \leq k (n-2)_{l-2} -1 \leq k n^{l-2} -1
$$
(see \cite[page 369, last estimate]{DE2013}).
The number $N_n$ of the subgraphs in $K_n$ which are isomorphic to $H$ satisfies the inequality
$$
\left({n}\atop{l}\right)
\leq N_n \leq n_l=n (n-1)\cdots (n-l-1)\,.
$$
Hence $N_n \asymp n^{l}$ and condition \eqref{addcond} is fulfilled. Summarizing, the cumulants of $Y$ can be bounded as follows: for any $j \geq 3$
$$
\big| \Gamma_j(Y) \big| \leq j! C_j \, n^l (k n^{l-2})^{j-1}.
$$
With the lower bound \eqref{variancebound} we can bound the cumulants of $Z := \frac{X}{\sqrt{\V X}}$ for $j\geq 3$
as follows:
$$
\big| \Gamma_j(Z) \big| \leq \frac{j! C_j}{n^{j-2}}.
$$
Here the constant $C_j$ is also depending on $k$ and $l$. See also the proof of Theorem 2.3 in \cite{DE2013}.
Summarizing, applying Corollary \ref{classical} we obtain for fixed $p$ and for any subgraph $H$ with $k$ edges and $l$ vertices the bound
$$
\big| \E (Z^{m}) - \E (N^{m}) \big|  \leq C_1(m,l,k)  \frac{1}{n^2}
$$
for even $m$.
%for $m$ being even and
%$$
%\big| \E (Z_n^{m}) \big|  \leq C_2(m,l,k)  \frac{1}{n}
%$$
%for $m$ being odd.
\end{example}

\section{Weighted dependency graphs}

Very recently, in \cite{Feray:2018} the concept of {\it weighted dependency graphs} was introduced. The concept includes the possibility of
having small weights $w_e \in [0,1]$ on the edges of the graph, which encode the dependency structure. Here  a weight 0 is the same as no edge.
The examples are sums of pairwise dependent random variables. For such families, the only usual dependency graph is the complete graph and the standard theory of dependency graphs is useless. 
Informally, that a family of random variables $\{ X_{n,i}, 1 \leq i \leq N_n \}$
admits a weighted graph $G$ as weighted dependency graph means that $G$ has vertex-set of size $N_n$, and the smaller the weight of an edge $\{a,b\}$
is, the closer to independent $X_{n,a}$ and $X_{n,b}$ should be. In particular, an edge of weight 0 means that $X_{n,a}$ and $X_{n,b}$ are independent.
Formally, this closeness to independence is not only measured by a bound on the covariance, but also involves bounds on higher order cumulants, see \cite[Definition 4.5]{Feray:2018}. % is to relax the independence hypothesis in Definition \ref{defDG}. 

To cut the story short, for each $n$, we consider a family $\{ X_{n,i}, 1 \leq i \leq N_n \}$ of
random variables with finite moments defined on the same probability space. We assume that for each $n$ one has a $(\Psi_n, C)$ weighted dependency graph
$L_n$ for $\{ X_{n,i}, 1 \leq i \leq N_n \}$ in the sense of Definition 4.5 in \cite{Feray:2018}, and we let $Y_n = \sum_{i=1}^{N_n} X_{n,i}$ and $\sigma_n^2 = \V(Y_n)$, and we assume that this sequence
admits a uniform control on cumulants with DNA $(Q_n, R_n ,1)$: We assume that $Q_n = o(R_n)$, $R_n \to \infty$ as $n \to \infty$ and for all $j \geq 1$,
\begin{equation} \label{cumulantboundwdg}
\big| \Gamma_j(Y_n) \big| \leq C_j R_n Q_n^{j-1},
\end{equation}
with a constant $C_j$ only depending on $j$. Although models with a corresponding weighted dependency graph are much more complicated concerning the
dependency structure, \cite{Feray:2018} has been successful in obtaining  examples, where the uniform control of the cumulants can be checked. 
As noticed in \cite[Section 4.3]{Feray:2018} in the special case $\Psi_n \equiv1$, the quantities $R_n$ and $Q_n$ in \eqref{cumulantboundwdg} can be replaced by $N_n$
(the number of vertices) and $D_n$ (the maximal weighted degree plus 1). In the following three examples, we restrict ourselves to this case:

\begin{example}[Crossings in random pair partitions]
A pair partition of $[2n] := \{1,2,\ldots, 2n\}$ is a set $H$ of disjoint 2-element subsets of $[2n]$ whose union is $[2n]$.
For each $i$ in $[2n]$ there is a unique $j \not= i$ such that $\{i,j\}$ is in $H$, the partner of $i$. A uniform random pair partition of $[2n]$ can be constructed as follows: Take $i_1$ arbitrarily and choose its partner $j_1$ uniformly at random among numbers different from $i_1$, i.e. each number different from $i_1$ is taken with probability $1/(2n-1)$. Then take $i_2$ arbitrarily, different from $i_1$ and $j_1$, and choose its partner $j_2$ uniformly at random among numbers different from $i_1, j_1$ and $i_2$ (with probability $1/(2n-3)$) and so on. 
 A {\it crossing} in a pair partition $H$ is a quadruple $(i,j,k,l)$ with $i < j< k <l$  such that  $\{i,k\}$ and $\{j,l\}$ belong to $H$.
 Now let $A_n$ be the set of two element subsets of $[2n]$. For $\{i,j\} \in A_n$ we define a random variable $X_{i,j}$ such that $X_{i,j}=1$, if $\{i,j\}$ belongs to the random pair partition $H_n$, and $0$ otherwise. Let $A_n'$ be the set of quadruples $(i,j,k,l)$ of elements of $[2n]$ with $i<j<k<l$. For $(i,j,k,l) \in A_n'$ we set
 $X_{i,j,k,l} := X_{i,k} X_{j,l}$. Hence this random variable has value $1$ if $(i,j,k,l)$ is a crossing in the random pair partition $H_n$, and $0$ otherwise.
 We consider the number of crossings in the random pair partition $H_n$
 $$
 Y_{n} := \sum_{i<j<k<l} X_{i,j,k,l}.
 $$
 In \cite[Theorem 6.5]{Feray:2018}, a CLT for $Z_n := (Y_n - \E Y_n)/ \sqrt{\V Y_n}$ was proved using the weighted dependency structure of this random variable.
 See \cite{cross} and references therein for numerous results on crossings.
 It was proven by showing that \eqref{cumulantboundwdg} holds true with a certain constant $C_j$, with $R_n \asymp n^2$ (see \cite[(6.3)]{Feray:2018}) and
 $Q_n=n$. Moreover, the variance of $Y_n$ was computed in \cite[Appendix B.1]{Feray:2018}, and we see that $\V Y_n \asymp n^3$. Hence assumption
 \eqref{addcond} holds and we obtain the bounds
$$
\big| \E (Z_n^{k}) - \E (N^{k}) \big|  \leq C_1(k)  \frac{1}{n}
$$
for even $k$.
%and
%$$
%\big| \E (Z_n^{k}) \big|  \leq C_2(m,l,k)  \frac{1}{n^{\frac 12}}
%$$
%for $k$ being odd.
\end{example}

\begin{example}[Subgraph counts in Erd\"os-R\'enyi model $G(n,m_n)$]
For each $n$, let $m_n$ be an integer between $0$ and ${n \choose 2}$. We now consider the Erd\"os-R\'enyi graph model $G(n,m_n)$, i.e. $G$ is a graph with vertex set $V=[n]$ and an edge set $E$ of size $m_n$, chosen uniformly at random among all possible edge sets of size $m_n$. We set $p_n := m_n / {n \choose 2}$. For any 2-element subset $\{i,j\}$ of $V$, we define $X_{i,j}$ such that $X_{i,j}=1$ if the edge $\{i,j \}$ belongs to the random graph $G$, and $0$ otherwise.
The value is $1$ with probability $p_n$. However, unlike in $G(n,p_n)$, these random variables are not independent. In \cite{Feray:2018}, a weighted dependency graph
in $(G(n,m_n)$ for the family $(X_{i,j})$ is presented.

Now fix a graph $H$ with at least one edge, and let $A_n^H$ be the set of subgraphs $H'$ of the complete graph $K_n$ on vertex set $[n]$ that are isomorphic to $H$. 
Let $G$ be a random graph with the distribution of the model $G(n,m_n)$. For $H'$ we write
$$
X_{H'} = \prod_{\{i,j\} \in E_{H'}} X_{i,j},
$$
and denote by
$$
Y_n^H = \sum_{H' \in A_n^{H'}} X_{H'}
$$
the number of subgraphs of $G$ that are isomorphic to $H$ (subgraph count statistic). In \cite[Proposition 7.2]{Feray:2018}, a weighted dependency graph for
the family $(X_{H'})_{H' \in A_n^H}$ was constructed. If $v_H$ denotes the number of vertices and $e_H$ the number of edges of $H$, we write
$$
\Phi_H := \min_{K \subset H, e_K >0} n^{v_k} p_n^{e_K}
$$
and
$$
\widetilde{\Phi}_H :=  \min_{K \subset H, eK >1} n^{v_k} p_n^{e_K}.
$$
In \cite[Theorem 7.5]{Feray:2018}, it was observed that \eqref{cumulantboundwdg} holds true with a certain constant $C_j$, with $R_n \asymp n^{v_H} p_n^{e_H}$ (see \cite[(7.3)]{Feray:2018}) and $Q_n= \frac{n^{v_H} p_n^{e_H}}{\Phi_H}$. Moreover we use the following estimate for the variance given in \cite[Lemma 7.3]{Feray:2018}:
\begin{equation} \label{variancegnm}
\V (Y_n^H) \geq C \frac{(n^{v_H} p_n^{e_H})^2}{\widetilde{\Phi}_H} (1-p_n)^2,
\end{equation}
for some constant $C>0$ and whenever $n (1-p_n)^2 \gg 1$ and $n$ is sufficiently large. Note that the variance of $Y_n^H$ has a different order of magnitude
than in the independent model $G(n,p_n)$, which was already observed in \cite{Janson:1990b}.

\begin{assumption} \label{ass2}
To be able to verify assumption \eqref{addcond}, we assume that $p \in (0,1)$ is fixed and $m_n \approx p {n \choose 2}$.
Moreover we assume that $H$ has a component with three vertices and two edges (a path $P_2$). 
\end{assumption}

The assumption implies that $\Phi_H \asymp \widetilde{\Phi}_H \asymp n^3$. Moreover we know that $\V (Y_n^H) \asymp n^{2 v_H-3}$ (whereas $\V (Y_n^H) \asymp n^{2 v_H -2}$ in the $G(n,p_n)$ random graph), see \cite[Example 6.55]{JLR:2000}.
We conclude that under Assumption \ref{ass2} we have
$$
\frac{\V(Y_n^H)}{R_n \, Q_n} \asymp {\text const.},
$$
and hence Assumption \eqref{addcond} is verified. Moreover we observe that
$$
\big| \Gamma_j(Y_n^H) \big| \leq C_j \, (n^{v_H} p^{e_H})^j \frac{1}{\Phi_H^{j-1}}.
$$
With the estimate \eqref{variancegnm}, we have with $Z_n^H = \frac{Y_n^H - \E(Y_n^H)}{\sqrt{\V(Y_n^H)}} $ that
$$
\big| \Gamma_j(Z_n^H) \big| \leq C_j(p) \frac{\widetilde{\Phi}_H^{j/2}}{\Phi_H^{j-1}} \leq \frac{\widetilde{C}_j(p)}{ \bigl( n^{3/2} \bigr)^{j-2}}. 
$$ 
With Theorem \ref{thmcumulants} or Theorem \ref{DNA-bound} we have proven: 

\begin{theorem}
Let $p \in (0,1)$ be fixed and $m_n \approx p {n \choose 2}$ and consider a random graph $G$ taken with Erd\"os-R\'enyi distribution $G(n, m_n)$.
Fix some graph $H$ that contains $P_2$. We denote by $Y_n^H$ the number of copies of $H$ in the random graph $G$. Then
with $Z_n^H = \frac{Y_n^H - \E(Y_n^H)}{\sqrt{\V(Y_n^H)}}$ we have for any even $k \geq 4$
$$
\big| \E \bigl( (Z_n^H)^{k} \bigr) - \E (N^{k}) \big|  \leq C_1(k)  \frac{1}{n^3}.
$$
%for $k$ being even and
%$$
%\big| \E \bigl( (Z_n^{H})^{k} \bigr) \big|  \leq C_2(k)  \frac{1}{n^{\frac 12}}
%$$
%for $k$ being odd.
\end{theorem}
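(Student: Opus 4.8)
The plan is to reduce the statement to a single application of Theorem \ref{thmcumulants}, by verifying that the normalized count $Z_n^H$ satisfies the cumulant condition \eqref{eqcumulants} with $\gamma = 0$ and effective parameter $\Delta_n = n^{3/2}$, and then reading off the even-$k$ conclusion of that theorem.

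First I would record the cumulant bound for the unnormalized count $Y_n^H$. Since the family $(X_{H'})_{H' \in A_n^H}$ admits a $(\Psi_n, C)$ weighted dependency graph with $\Psi_n \equiv 1$, the uniform control on cumulants \eqref{cumulantboundwdg} applies with $R_n \asymp n^{v_H} p_n^{e_H}$ and $Q_n = n^{v_H} p_n^{e_H}/\Phi_H$, so that for every $j \geq 3$
\begin{equation*}
\big| \Gamma_j(Y_n^H) \big| \leq C_j R_n Q_n^{j-1} \leq C_j \frac{(n^{v_H} p_n^{e_H})^j}{\Phi_H^{j-1}}.
\end{equation*}
Next I would normalize: dividing by $(\V Y_n^H)^{j/2}$ and inserting the variance lower bound \eqref{variancegnm}, which for fixed $p$ reads $\V(Y_n^H) \gtrsim (n^{v_H} p^{e_H})^2 / \widetilde{\Phi}_H$, cancels the powers of $n^{v_H} p^{e_H}$ and leaves
\begin{equation*}
\big| \Gamma_j(Z_n^H) \big| \leq C_j(p) \frac{\widetilde{\Phi}_H^{j/2}}{\Phi_H^{j-1}}.
\end{equation*}

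Here Assumption \ref{ass2} does the decisive work: the hypotheses that $p$ is fixed and that $H$ contains a copy of $P_2$ force $\Phi_H \asymp \widetilde{\Phi}_H \asymp n^3$, whence the right-hand side collapses to $C_j(p)\,(n^3)^{j/2}/(n^3)^{j-1} = \widetilde{C}_j(p)/(n^{3/2})^{j-2}$. This is exactly condition \eqref{eqcumulants} with $\gamma = 0$ and $\Delta_n = n^{3/2}$. Since $\Delta_n = n^{3/2} \to \infty$, the even-$k$ part of Theorem \ref{thmcumulants} then yields, for even $k \geq 4$,
\begin{equation*}
\big| \E \bigl( (Z_n^H)^{k} \bigr) - \E (N^{k}) \big| \leq C_1(k) \frac{1}{\Delta_n^2} = C_1(k) \frac{1}{n^3},
\end{equation*}
which is the assertion.

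I expect the genuine difficulty to lie not in this final invocation of the master theorem but in the two inputs that feed it, both of which are imported from \cite{Feray:2018}: controlling the high-order cumulants of the strongly dependent family $(X_{H'})$ through the weighted dependency graph machinery, and securing the matching variance lower bound \eqref{variancegnm} (note that in $G(n,m_n)$ the variance has a different order of magnitude than in $G(n,p_n)$). The exponent bookkeeping is delicate in the sense that the cumulant growth is governed by $\Phi_H$ while the variance is governed by $\widetilde{\Phi}_H$, and it is precisely the role of Assumption \ref{ass2} to pin both of these quantities at the common order $n^3$, so that their ratio produces the single effective scale $\Delta_n = n^{3/2}$ and hence the rate $1/n^3$.
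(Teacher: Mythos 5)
Your proposal is correct and follows essentially the same route as the paper: it imports the weighted-dependency-graph cumulant bound $|\Gamma_j(Y_n^H)| \leq C_j (n^{v_H}p^{e_H})^j/\Phi_H^{j-1}$ and the variance lower bound \eqref{variancegnm} from F\'eray, uses Assumption \ref{ass2} to get $\Phi_H \asymp \widetilde{\Phi}_H \asymp n^3$ so that $|\Gamma_j(Z_n^H)| \leq \widetilde{C}_j(p)/(n^{3/2})^{j-2}$, and then applies Theorem \ref{thmcumulants} with $\Delta_n = n^{3/2}$. The exponent bookkeeping and the final invocation of the even-$k$ bound $C_1(k)/\Delta_n^2 = C_1(k)/n^3$ match the paper's argument exactly.
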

\end{example}

\begin{example}[Spins in the $d$-dimensional Ising model]
The Ising model on a finite subset $\Lambda$ of $\Z^d$ is given by the Gibbs distribution
$$
\mu_{\Lambda, \beta, h}(\omega) = \frac{1}{Z_{\Lambda, \beta, h}} e^{- H_{\Lambda, \beta, h}}
$$
with
$$
H_{\Lambda, \beta, h} := - \beta \sum_{\{i,j\} \in {\mathcal E}_{\Lambda}} \sigma_i(\omega) \sigma_j(\omega) - h \sum_{i \in \Lambda} \sigma_i(\omega)
$$
for each $\omega = \bigl( \sigma_i(\omega) \bigr)_{i \in \Lambda}$ with $\sigma_i(\omega) \in \{-1,+1\}$. Here $h \in \R$ is called the magnetic field and $\beta>0$ the inverse temperature, and $ {\mathcal E}_{\Lambda}:= \{ \{i,j\} \subset \Lambda: \|i-j\|_1=1\}$ is the set of nearest neighbour pairs in $\Lambda$, measured in the
graph distance $\| \cdot \|_1$ in $\Z^d$. $Z_{\Lambda, \beta, h}$ is called the partition function.
All the quantities are with free boundary conditions so far, which means that the value of the spins outside of $\Lambda$ is not taken into consideration. Fixing a spin configuration $\eta \in \{-1, +1 \}^{\Z^d}$, we define a spin configuration in $\Lambda$ with {\it boundary condition $\eta$}
as an element of the set $\Omega_{\Lambda}^{\eta} := \{ \omega \in \{-1, +1\}^{\Z^d} : \omega_i = \eta_i \,\, \forall i \notin \Lambda \}$. Then the Hamiltonian is given by 
$$
H_{\Lambda, \beta, h}^{\eta} := - \beta \sum_{\{i,j\} \in {\mathcal E}_{\Lambda}^b} \sigma_i(\omega) \sigma_j(\omega) - h \sum_{i \in \Lambda} \sigma_i(\omega)
$$
with ${\mathcal E}_{\Lambda}^b := \{ \{i,j\} \subset \Lambda: \|i-j\|_1=1, \{i,j\} \subset \Lambda \not= \empty \}$. The corresponding probability distributions are denoted by
$\mu_{\Lambda, \beta, h}^{\eta}$. The most classical boundary conditions are the $+$ boundary condition, where $\eta_i = +1$ for all $i \in \Z^d$, and the $-$ boundary condition,  where $\eta_i = -1$ for all $i \in \Z^d$. Quantities with $+$ (resp. $-$) boundary condition are denoted with a superscript $+$ (or $-$ respectively), e.g. 
$\mu_{\Lambda, \beta, h}^{+}$.

We now take an increasing sequence $\Lambda_n$ of finite sets with $\bigcup_{n \geq 1} \Lambda_n = \Z^d$. It is well known that the sequence $
\bigl( \mu_{\Lambda_n, \beta, h}^{+} \bigr)_n$ converges in the weak sense to a measure denoted by $\mu_{\beta, h}^{+}$, as $n \to \infty$, see \cite[Chapter 3]{FV:book}.
In a high temperature regime with $\beta < \beta_1(d)$ and $h=0$ (meaning that there exists a $\beta_1(d)$) or in the presence of a magnetic field $h \not= 0$, the limiting measure is independent of the choice of the boundary conditions. At low temperature $\beta > \beta_2(d)$ and $h=0$, the limiting measure depends on the boundary conditions. Here, we restrict ourselves to $+$ boundary conditions to have a well defined limiting measure in all cases. We drop the superscript $+$ and denote the limiting measure by $\mu_{\beta, h}$. 

The decay of joint cumulants of the spins under $\mu_{\beta,h}$ has been studied in a few research articles. A good summary is
\cite[Theorem 1.1]{DousseFeray:2018} and reads as follows. For random variables $X_1, \ldots, X_j$ with finite moments, consider the joint cumulant as
$$
\Gamma(X_1, \ldots, X_j) = [t_1, \ldots, t_j] \log \E \exp(t_1 X_1 + \cdots + t_j X_j).
$$
Here $[t_1, \ldots, t_j] F$ stands for the coefficient of $t_1 \cdots t_j$ in the series expansion of $F$ in positive powers of $t_1, \ldots, t_j$. Note that $\Gamma_j(X) = \Gamma(X, \ldots, X)$.  

\begin{theorem} \label{Ising1}
For the Ising model on $\Z^d$ with parameters $(\beta,h)$, there exist positive constants $\varepsilon(d)<1, \beta_1(d), \beta_2(d)$ and $h(d)$ depending on the dimension $d$ 
with the following property. Assume that we are in one of the regimes $h > h(d)$, or  $h=0$ and $\beta < \beta_1(d)$, or  $h=0$ and $\beta > \beta_2(d)$. Then for
any $j \geq 1$, there exists a constant $C_j$ such that for all $A = \{ i_1, \ldots, i_j \} \subset \Z^d$, one has
$$
\Gamma_j^{\beta,h} (\sigma_{i_1}, \ldots, \sigma_{i_j}) \leq C_j \varepsilon(d)^{l_T(A)}.
$$
Here we consider the joint cumulants with respect to the measure $\mu_{\beta, h}$ and $l_T(A)$ denotes the minimum length of a tree connecting vertices of $A$.  
\end{theorem}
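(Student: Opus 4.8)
The plan is to reduce the claim to a uniform estimate in finite volume and then, in each of the three regimes, to produce a convergent cluster (polymer) expansion in which the joint cumulant is realized as a sum over \emph{connected} clusters that must touch all of the sites $i_1,\dots,i_j$. Since $\mu_{\beta,h}$ is the weak limit of the finite-volume measures $\mu_{\Lambda_n,\beta,h}^{+}$ and the spins are bounded, the joint cumulant $\Gamma_j^{\beta,h}(\sigma_{i_1},\dots,\sigma_{i_j})$ is the limit of the corresponding finite-volume cumulants, so it suffices to prove the bound $C_j\varepsilon(d)^{l_T(A)}$ uniformly over $\Lambda_n \supset A$. Using the coefficient-extraction definition of the joint cumulant, I would introduce site-dependent sources $h_i$ and write the finite-volume cumulant as the mixed derivative $\partial_{h_{i_1}}\cdots\partial_{h_{i_j}}\log Z|_{h_i\equiv h}$, which by the general structure of the logarithm of a polymer partition function is a sum over clusters, each cluster contributing only if it depends on every source $h_{i_a}$, i.e.\ only if it is incident to all $j$ sites.

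The three regimes supply three expansions. For $h=0$ and $\beta<\beta_1(d)$ I would use the classical high-temperature expansion: writing $e^{\beta\sigma_i\sigma_j}=\cosh\beta\,(1+\sigma_i\sigma_j\tanh\beta)$ and summing over $\omega$ leaves a polymer model on even subgraphs, and $\langle\sigma_{i_1}\cdots\sigma_{i_j}\rangle$ is represented by subgraphs in which exactly the sites of $A$ have odd degree; the truncated part corresponds to connected such subgraphs, whose edges each carry a factor $\tanh\beta$. For $h>h(d)$ I would expand around the fully aligned configuration, the defects being weighted by a small parameter controlled by $h$. For $h=0$ and $\beta>\beta_2(d)$, with $+$ boundary conditions, I would use the low-temperature Peierls-contour expansion (see \cite{FV:book}), each contour $\gamma$ carrying weight $e^{-2\beta|\gamma|}$. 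In all three cases the relevant smallness parameter, say $\varepsilon_0$, a monotone function of $\tanh\beta$, of the field strength, or of $e^{-2\beta}$, can be made small enough that the Koteck\'y--Preiss criterion holds, so the cluster expansion of $\log Z$ converges absolutely with geometric decay in the size of the clusters.

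The combinatorial heart of the argument is then a single geometric estimate. A connected cluster incident to all of $i_1,\dots,i_j$ is, after projecting to $\Z^d$, a connected geometric object spanning $A$, so its length is at least $l_T(A)$, the minimal length of a tree joining the sites of $A$. Each unit of length carries a factor $\varepsilon_0$, while the number of connected objects of length $L$ through a fixed point grows at most like $\kappa(d)^{L}$ for a dimension-dependent connective constant $\kappa(d)$. Summing the cluster weights over all spanning clusters of length $\geq l_T(A)$ therefore gives $\sum_{L\geq l_T(A)}(\kappa(d)\varepsilon_0)^{L}\leq C\,\varepsilon(d)^{l_T(A)}$ provided $\kappa(d)\varepsilon_0<1$, which is exactly what the thresholds $\beta_1(d),\beta_2(d),h(d)$ guarantee, with $\varepsilon(d):=\kappa(d)\varepsilon_0<1$. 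The remaining factors (the choice of which of the $j$ sources each cluster meets, the derivative combinatorics, and the bounded number of cluster shapes attached to a spanning skeleton) are absorbed into a constant $C_j$ depending only on $j$.

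The step I expect to be the main obstacle is the low-temperature regime. Unlike the high-temperature and high-field expansions, the contour expansion requires the Peierls/Pirogov--Sinai machinery; one must check that with $+$ boundary conditions a single relevant ground state makes a one-species contour expansion legitimate, that truncated spin correlations are genuinely expressed through contour clusters \emph{linking} the sites (so that the lower bound $l_T(A)$ applies to the contour length, not merely to a weaker quantity), and that the resulting $\varepsilon(d)$ and $C_j$ are uniform in $\Lambda_n$ and in the configuration $A$, including the regime of clustered sites where $l_T(A)$ is small and the estimate must degenerate to an ordinary moment bound. Packaging the three regimes so that one common $\varepsilon(d)<1$ and one $C_j$ serve all of them, and then passing to the infinite-volume limit while preserving the uniform bound, is where the bulk of the technical work lies.
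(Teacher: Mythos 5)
The paper does not prove this statement at all: Theorem \ref{Ising1} is quoted verbatim as Theorem 1.1 of \cite{DousseFeray:2018}, which is itself a compilation of classical cluster-expansion results (going back to work of Duneau, Iagolnitzer and Souillard and others) in the three perturbative regimes. So there is no in-paper argument to compare yours against; what can be said is that your strategy is precisely the one used in that literature. The reduction to finite volume via weak convergence of $\mu_{\Lambda_n,\beta,h}^{+}$ and boundedness of the spins is correct, the representation of the $j$-th joint cumulant as a mixed derivative of $\log Z$ with site-dependent sources is the right starting point, and the energy--entropy estimate $\sum_{L\geq l_T(A)}(\kappa(d)\varepsilon_0)^{L}\leq C\,\varepsilon(d)^{l_T(A)}$ over connected clusters spanning $A$ is exactly the mechanism that produces the tree-length decay.

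That said, as written your text is an outline of a known proof rather than a proof, and the two places where the substance lives are the ones you flag yourself. First, the assertion that only clusters incident to all $j$ sources survive in the cumulant is the linked-cluster theorem; for $j\geq 3$ one must verify that the Koteck\'y--Preiss convergence is strong enough to differentiate the cluster series $j$ times term by term, and that the resulting combinatorial factors (assignments of sources to polymers, multiple occupancy of sites, clusters visiting a site of $A$ more than once) are bounded by a constant depending only on $j$ and not on the geometry of $A$. Second, in the low-temperature regime $\sigma_i$ is not a small observable: one has to re-express the spins through the contours separating $i$ from the $+$ boundary and show that the \emph{truncated} correlation is carried by clusters of contours whose union connects all points of $A$, so that the lower bound $l_T(A)$ applies to the total contour length rather than to some weaker quantity; this is genuinely more delicate than the high-temperature case and is the actual content of the references behind \cite{DousseFeray:2018}. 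Your proposal correctly identifies all of this, but none of it is carried out, so it should be read as an accurate roadmap to the cited proof rather than a self-contained replacement for it.
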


The bounds on joint cumulants had been translated in terms of weighted dependency graphs for the spin variables in \cite[Theorem 1.2]{DousseFeray:2018}:

\begin{theorem} \label{Ising2}
Let $\omega = \bigl( \sigma_i(\omega) \bigr)_{i \in \Z^d}$ be a spin configuration according to $\mu_{\beta,h}$, where either  
$h > h(d)$, or $h=0$ and $\beta < \beta_1(d)$, or $h=0$ and $\beta > \beta_2(d)$. Let $G$ be the complete weighted graph
with vertex set $\Z^d$, such that every edge $e=(i,j)$ has weight $w_e = \varepsilon(d)^{\frac{\| i - j\|_1}{2}}$, where $\varepsilon$ comes 
from Theorem \ref{Ising1}. Then $G$ is a $C$-weighted dependency graph (see \cite[Definition 2.3]{DousseFeray:2018})
for the family $\{ \sigma_i, i \in \Z^d \}$ and some $C=(C_r)_r$.
\end{theorem}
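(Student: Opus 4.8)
The plan is to verify the defining inequality of a $C$-weighted dependency graph (Definition 2.3 in \cite{DousseFeray:2018}) directly from the cumulant bound of Theorem \ref{Ising1}, so that the whole proof becomes a translation between two ways of measuring how far apart the vertices indexing a cumulant sit. Recall that, in the form relevant here (bounded spins, so $\Psi \equiv 1$), the definition asks that for every finite $A = \{i_1, \ldots, i_j\} \subset \Z^d$ one has
$$
\bigl| \Gamma_j^{\beta,h}(\sigma_{i_1}, \ldots, \sigma_{i_j}) \bigr| \leq C_j \max_{T} \prod_{e \in T} w_e,
$$
where the maximum runs over spanning trees $T$ of the subgraph of $G$ induced on $A$ and $\prod_{e \in T} w_e$ is the weight of the tree. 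So the task reduces to identifying this maximal spanning-tree weight and matching it against the bound $C_j \varepsilon(d)^{l_T(A)}$ supplied by Theorem \ref{Ising1}.

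First I would rewrite the spanning-tree weight as a length. With $w_e = \varepsilon(d)^{\|i-j\|_1/2}$ for the edge $e = (i,j)$ and $\varepsilon(d) \in (0,1)$, the weight of a spanning tree $T$ on $A$ is
$$
\prod_{e=(i,j) \in T} \varepsilon(d)^{\|i-j\|_1/2} = \varepsilon(d)^{\frac 12 \operatorname{len}(T)}, \qquad \operatorname{len}(T) := \sum_{(i,j) \in T} \|i-j\|_1 ,
$$
so that, since $\varepsilon(d) < 1$, maximizing the product is the same as minimizing the total length:
$$
\max_{T} \prod_{e \in T} w_e = \varepsilon(d)^{\frac 12 L_A}, \qquad L_A := \min_{T \text{ spanning tree of } A} \operatorname{len}(T).
$$

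The heart of the argument is then to compare $L_A$, the minimum length of a tree using only the vertices of $A$ with straight $\ell^1$-distances, against $l_T(A)$, the minimum length of a (Steiner) tree in $\Z^d$ connecting $A$. On the one hand $l_T(A) \leq L_A$, since a spanning tree on $A$ whose edges are realized as lattice geodesics is itself a connected subgraph of $\Z^d$ containing $A$. On the other hand $L_A \leq 2\, l_T(A)$: doubling the edges of an optimal Steiner tree gives an Eulerian multigraph of total length $2\, l_T(A)$, an Euler tour of which, after shortcutting repeated vertices via the triangle inequality for $\|\cdot\|_1$, yields a closed walk visiting every vertex of $A$ of length at most $2\, l_T(A)$; deleting one edge leaves a spanning tree of $A$ of length at most $2\, l_T(A)$. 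Hence $l_T(A) \geq L_A/2$, which is exactly what the factor $\tfrac12$ in the exponent of $w_e$ is designed to absorb. Combining this with Theorem \ref{Ising1} and $\varepsilon(d) \in (0,1)$,
$$
\bigl| \Gamma_j^{\beta,h}(\sigma_{i_1}, \ldots, \sigma_{i_j}) \bigr| \leq C_j\, \varepsilon(d)^{l_T(A)} \leq C_j\, \varepsilon(d)^{L_A/2} = C_j \max_{T} \prod_{e \in T} w_e ,
$$
which is precisely the defining inequality, with the same constants $C=(C_j)_j$ as in Theorem \ref{Ising1}.

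The only genuinely nontrivial step I anticipate is the metric comparison $L_A \leq 2\, l_T(A)$, and checking that the shortcutting is legitimate for the graph metric $\|\cdot\|_1$ on $\Z^d$; everything else is bookkeeping and the monotonicity of $x \mapsto \varepsilon(d)^x$. I would also dispose of the harmless degenerate cases: if some of the $i_k$ coincide, the corresponding edge carries weight $\varepsilon(d)^0 = 1$ and may be contracted without changing either side, and the case $j \leq 2$ is covered directly by the covariance and variance bounds, so the interesting range is $j \geq 3$ with distinct vertices.
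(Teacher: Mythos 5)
The paper does not prove this statement itself: it imports it verbatim from \cite{DousseFeray:2018} (Theorem 1.2 there), so there is no in-paper proof to compare against. Your reconstruction is nevertheless correct and is the intended argument behind the cited result: the defining inequality of a $C$-weighted dependency graph reduces to comparing the maximal spanning-tree weight $\varepsilon(d)^{L_A/2}$ on the induced subgraph with the Steiner-tree bound $\varepsilon(d)^{l_T(A)}$ from Theorem \ref{Ising1}, and the only substantive point is the metric inequality $L_A \leq 2\, l_T(A)$, which you establish by the standard doubling/Euler-tour/shortcutting argument (valid here since $\|\cdot\|_1$ is a genuine metric on $\Z^d$). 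This also correctly explains why the edge weights carry the exponent $\|i-j\|_1/2$ rather than $\|i-j\|_1$: the factor $\tfrac12$ exactly absorbs the worst-case ratio between minimal spanning trees on $A$ and Steiner trees in $\Z^d$. Your handling of multisets (weight-one loops/parallel edges for repeated indices, boundedness of the spins for small $j$) covers the remaining degenerate cases, so the argument is complete.
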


We now consider $\Lambda_n := [-n,n]^d$ the $d$-dimensional cube centred at $0$ of side length $2n$, and we consider the magnetization $S_n = \sum_{i \in \Lambda_n} \sigma_i$
and 
$$
Z_n := \frac{S_n - \E (S_n)}{\sqrt{ \V (S_n)}}.
$$
With \cite[Lemma V.7.1]{Ellis:LargeDeviations} we know that $\sigma^2 := \lim_{n \to \infty} \frac{\V(S_n)}{|\Lambda_n|}$ exists as an extended real number.
Moreover, it is known that $\sigma^2 >0$, and that it is finite in the three regimes of Theorem \ref{Ising1}, see 
\cite[Corollary 4.4 and the proof of Theorem 4.2]{DousseFeray:2018}. With Theorem \ref{Ising2}, the number of vertices of the weighted dependency graph on 
$\Lambda_n$ is $|\Lambda_n| = (2n+1)^d$.  The maximal weighted degree is
$$
D_n -1 = \max_{i \in \Lambda_n} \sum_{j \in \Lambda_n} \varepsilon^{\frac{\| i - j \|_1}{2}}.
$$
As presented in \cite{DousseFeray:2018}, this object is bounded by a constant. Hence we can apply Theorem \ref{DNA-bound} -- condition \eqref{addcond}
is satisfied. We have proved the result:

\begin{theorem} \label{Ising3}
Consider the Ising model on $\Z^d$, with inverse temperature $\beta$ and magnetic field $h$, such that either $h > h(d)$, or $h=0$ and $\beta < \beta_1(d)$, or
$h=0$ and $\beta > \beta_2(d)$. Then for even $k$ with $k \geq 4$, we have
$$
\big| \E_{\beta,h} \bigl( Z_n^{k} \bigr) - \E (N^{k}) \big|  \leq C_1(k)  \frac{1}{(2n+1)^d}.
$$
\end{theorem}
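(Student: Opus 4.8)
The plan is to reduce Theorem \ref{Ising3} to an application of Theorem \ref{DNA-bound}, by verifying that the magnetization $S_n = \sum_{i \in \Lambda_n} \sigma_i$ admits a uniform control on cumulants with a suitable DNA $(D_n, N_n, A)$ and that the normalizing condition \eqref{addcond} holds. The two structural inputs are already supplied in the excerpt: Theorem \ref{Ising2} identifies the weighted dependency graph $G$ on $\Z^d$ with edge weights $w_e = \varepsilon(d)^{\|i-j\|_1/2}$, and the decay estimate of Theorem \ref{Ising1} controls joint cumulants of the spins. So the task is really bookkeeping: extract from these the three quantities $N_n$, $D_n$ and $A$, and then quote Theorem \ref{DNA-bound}.

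First I would fix $N_n = |\Lambda_n| = (2n+1)^d$ as the number of vertices and $A = 1$, since each spin $\sigma_i$ is bounded by $1$. Next I would bound the maximal weighted degree. By Theorem \ref{Ising2} the relevant quantity is
$$
D_n - 1 = \max_{i \in \Lambda_n} \sum_{j \in \Lambda_n} \varepsilon^{\|i-j\|_1/2}.
$$
The key observation is that this sum is bounded uniformly in $n$: since $\varepsilon(d) < 1$, the geometric decay in the graph distance $\|i-j\|_1$ makes $\sum_{j \in \Z^d} \varepsilon^{\|i-j\|_1/2}$ a convergent series whose value is independent of $i$ by translation invariance, and bounded by the full lattice sum. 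Hence $D_n \leq D$ for some constant $D = D(d,\varepsilon)$. This is the single genuinely quantitative step, and it is elementary: one counts the number of lattice points at graph distance $r$ from a fixed point, which grows polynomially in $r$, and sums against the exponentially decaying weight $\varepsilon^{r/2}$. With $D_n$ bounded, the cumulant bound \eqref{DNA} reads $|\Gamma_j(S_n)| \leq C_j N_n D^{j-1}$, so $D_n = o(N_n)$ holds automatically because $N_n = (2n+1)^d \to \infty$ while $D_n$ stays bounded.

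Then I would verify the normalizing assumption \eqref{addcond}. Here I invoke \cite[Lemma V.7.1]{Ellis:LargeDeviations}, which guarantees that $\sigma^2 := \lim_{n} \V(S_n)/|\Lambda_n|$ exists, together with the fact, cited from \cite{DousseFeray:2018}, that $\sigma^2$ is strictly positive and finite in each of the three regimes. Because $D_n$ is bounded (say $D_n \to D_\infty$ along the relevant asymptotics, or at worst $D_n \asymp 1$), the ratio $\Gamma_2(S_n)/(N_n D_n) = \V(S_n)/(N_n D_n)$ converges to a positive finite limit, which is exactly condition \eqref{addcond}. At this point all hypotheses of Theorem \ref{DNA-bound} are met, and applying it to $Z_n = (S_n - \E S_n)/\sigma_n$ yields for even $k$ the bound
$$
\big| \E_{\beta,h}\bigl(Z_n^{k}\bigr) - \E(N^{k}) \big| \leq C_1(k)\, \frac{D_n}{N_n} \leq C_1(k)\, \frac{D}{(2n+1)^d},
$$
and absorbing the constant $D$ into $C_1(k)$ gives the stated rate $1/(2n+1)^d$.

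The main obstacle, such as it is, lies entirely in the weighted-degree estimate of the previous paragraph — confirming that the lattice sum $\sum_{j} \varepsilon^{\|i-j\|_1/2}$ converges uniformly in $i$ and in $n$. Everything downstream is a direct citation: the cumulant bounds come from Theorem \ref{Ising2} (hence from \ref{Ising1}), the variance asymptotics from \cite{Ellis:LargeDeviations} and \cite{DousseFeray:2018}, and the final conclusion from Theorem \ref{DNA-bound}. I do not expect any subtlety in matching the DNA framework to this setting, since the excerpt has already established that the $\Psi_n \equiv 1$ special case of the weighted dependency graph formalism reduces precisely to the $(N_n, D_n)$ control with $N_n$ the number of vertices and $D_n$ the maximal weighted degree plus one.
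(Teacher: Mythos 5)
Your proposal is correct and follows essentially the same route as the paper: identify the $\Psi_n\equiv 1$ weighted dependency graph from Theorem \ref{Ising2}, take $N_n=(2n+1)^d$ and note that the maximal weighted degree is bounded by a constant, verify condition \eqref{addcond} via the variance asymptotics from \cite{Ellis:LargeDeviations} and \cite{DousseFeray:2018}, and conclude by Theorem \ref{DNA-bound}. The only difference is cosmetic: you spell out the elementary lattice-sum argument for the bounded weighted degree, where the paper simply cites \cite{DousseFeray:2018}.
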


\begin{remark} As was pointed out in \cite{DousseFeray:2018}, local and global patterns of spins in the Ising model satisfy a central limit theorem as well.
For details see Theorem 1.3 and 1.4 in \cite{DousseFeray:2018}. For local patterns the result of Theorem \ref{Ising3} can be proved.
For global patterns of size $m$, at least in the case where the patterns consist of positive spins only, the same result follows from \cite[proof of Theorem 4]{DousseFeray:2018} with a constant $C_1(k,m)$, which is depending on the size $m$ as well. The details are omitted.
\end{remark}
\end{example}

\section{Non-degenerate $U$-statistics}
Let $X_1,\dots,X_n$ be independent and identically distributed random
variables with values in a measurable space $\mathcal X$. For a
measurable and symmetric function $h:{\mathcal X}^2\to \R$ we define
$$
U_n(h):= \frac{1}{\left(n\atop 2\right)}
\sum_{1\leq i_1<i_2 \leq n} h(X_{i_1},X_{i_2})\:,
$$
where symmetric means invariant under any permutation of its arguments.
$U_n(h)$ is called a {\it U-statistic} with {\it kernel} $h$ and
{\it degree} $2$.
Define the conditional expectation by
\begin{eqnarray*}
h_1(x_1)
&:=&\E\bigl[ h(x_1, X_2)\bigr]
\\
&=&\E\bigl[ h(X_1,X_m)\big| X_1=x_1\bigr]
\end{eqnarray*}
and the variance by $\sigma_1^2:=\V\bigl[h_1(X_1)\bigr]$.
A U-statistic is called {\it non-degenerate} if $\sigma_1^2>0$.  We consider U-statistics which are assumed to be non-degenerate. 
Assume that $0 < \sigma_1^2 < \infty$, and suppose that there exist constants $\gamma\geq1$ and $C>0$ such that
\begin{equation}\label{UStatKum}
\E\bigl[|h(X_1,X_2)|^j\bigr]\leq C^j (j!)^{\gamma}
\end{equation}
for all $j\geq 3$. According to \cite{Alesk:1990}, see \cite[Lemma 5.3]{SaulisStratulyavichus:1989},
the cumulants of $U_n$ can be bounded by
$$
|\Gamma_j(U_n)| < 2 e^{2(j-2)}\frac{2^j-1}{j} C^j (j!)^{1+\gamma} \frac{1}{n^{j-1}}
$$
for all $j=1,2,\dots,n-1$ and $n\geq 7$. The quite involved proof is presented in \cite{SaulisStratulyavichus:1989}.
The variance for the non-degenerate $U$-statistic is given by
$\V (U_n)= \frac{4 \sigma_1^2}{n} \frac{n-2}{n-1} + \frac{2 \sigma_2^2}{n(n-1)}$,
see Theorem 3 in \cite[chapter 1.3]{Lee:U-Statistics}. Hence there exists an $n_0\geq 7$
large enough such that $\sqrt{\V (U_n)}\geq \frac{e \sigma_1}{\sqrt{2n}}$.
The following bound holds for the cumulants of $Z_n := \frac{U_n}{\sqrt{\V(U_n)}}$:
$$
|\Gamma_j (Z_n) |\leq
(j!)^{1+\gamma} \left(\frac{2 \sqrt{2} e C(\sigma_1)}{\sqrt{n}}\right)^{j-2},
$$
for all $j= 3,\dots, n-1$ and $n\geq n_0$.
Applying Theorem \ref{thmcumulants}, we have for any even $k \geq 4$
$$
\big| \E (Z_n^{k}) - \E (N^{k}) \big|  \leq C_1(k)  \frac{1}{n}.
$$
%when $k$ is even and
%$$
%\big| \E (Z_n^{k} )  \big|  \leq C_2(k)  \frac{1}{n^{\frac 12}},
%$$
%if $k$ is odd.

\section{Characteristic polynomials in the circular ensembles}

Consider the characteristic polynomial $Z(\theta) := Z(U,\theta) =
\det\bigl(I-U e^{-i \theta}\bigr)$ of a unitary $n \times n$ matrix $U$. The matrix $U$ is considered as a random variable in the 
{\it circular unitary ensemble} (CUE), that is the unitary group $U(n)$ equipped with the unique translation-invariant (Haar) probability measure.
In \cite{KeatingSnaith:2000}, exact expressions for any matrix size $n$ are derived for the moments of $|Z|$, and from these the asymptotics of the value 
distribution and cumulants of the real and imaginary parts of $\log Z$ as $n \to \infty$ are obtained. In the limit, these distributions are independent and Gaussian.
In \cite{KeatingSnaith:2000} the results were generalized to the circular orthogonal (COE) and the circular symplectic (CSE) ensembles. 
Let us consider the representation of $Z(U, \theta)$ in terms
of the eigenvalues $e^{i \theta_k}$ of $U$:
$$
Z(U,\theta)= \det\bigl(I-U e^{-i \theta}\bigr) = \prod_{k=1}^n \bigl(1-e^{i(\theta_k-\theta)}\bigr).
$$
Now let $Z$ represent the characteristic polynomial of an $n \times n$ matrix $U$ in either the CUE ($\beta=2$), the COE ($\beta=1$), or the CSE ($\beta=4$).
The $C \beta E$ average can then be performed using the joint probability density for the eigenphases $\theta_k$
$$
\frac{(\beta/2)!^n}{(n \beta/2)! (2 \pi)^n} \prod_{1 \leq j < m \leq n} |e^{i \theta_j} - e^{i \theta_m}|^{\beta}.
$$
Hence the $s$-th moment of $|Z|$ is of the form
$$
\langle |Z|^s \rangle_{\beta} = \frac{(\beta/2)!^n}{(n \beta/2)! (2 \pi)^n} \int_0^{2 \pi} \cdots \int_0^{2 \pi} d\theta_1 \cdots d\theta_n 
\prod_{1 \leq j < m \leq n} |e^{i \theta_j} - e^{i \theta_m}|^{\beta} \times \bigg| \prod_{k=1}^n \bigl(1 - e^{i(\theta_k - \theta)} \bigr) \bigg|^s.
$$
This integral can be evaluated using Selberg's formula, see \cite{Mehta:book}, which leads to
$$
\langle |Z|^s \rangle_{\beta} = \prod_{j=0}^n \frac{\Gamma(1 + j \beta /2) \Gamma(1 + s + j \beta /2)}{(\Gamma( 1 + s/2 + j \beta /2))^2},
$$
where $\Gamma$ (without an index) denotes the Gamma function.
Hence $\log \langle |Z|^s \rangle_{\beta}$ has a simple form and, at the same time, by definition equals $\sum_{j \geq 1} \frac{\Gamma_j(\beta)}{j!} s^j$,
where $\Gamma_j(\beta)= \Gamma_j(\Re \log Z)$ denotes the $j$-th cumulant of the distribution of the real part of $\log Z$ under $C \beta E$. Differentiating $\log \langle |Z|^s \rangle_{\beta}$
one obtains
\begin{equation*} \label{vorbereitung}
\Gamma_j (\beta) = \frac{2^{j-1} -1}{2^{j-1}} \sum_{k=0}^{n-1} \psi^{(j-1)}(1 + k \beta /2),
\end{equation*}
where 
$$
\psi^{(j)}(z):= \frac{d^{j+1} \log \Gamma(z)}{dz^{j+1}}
= (-1)^{j+1} \int_0^\infty \frac{t^j e^{-zt}}{1-e^{-t}} dt
$$
for $z\in\mathbb C$ with $\Re z>0$ are the polygamma functions. In \cite[Section 4]{DE2013} we proved that
\begin{equation*}
\left| \Gamma_j\Bigl(\frac{\Re \log(Z)}{\sigma_{n,\beta}}\Bigr) \right|
\leq
(j!) \frac{1}{\sigma_{n,\beta}^{j-2}}
\left\{\begin{array}{ll}
2^j \frac{\pi^2}{3} & \text{for }\beta=1\\
4 \frac{\pi^2}{6} & \text{for }\beta=2\\
8 \frac{\pi^2}{6} & \text{for }\beta=4
\end{array}
\right.
\end{equation*}
for all $j\geq 3$, hence equation \eqref{eqcumulants} is satisfied for $\gamma=0$ and
$\Delta_n=  \sigma_{n,\beta}$.
The $j$-th cumulant of the distribution of the imaginary part of $\log Z$ can be bounded by
the $j$-th cumulant of the distribution of the real part of $\log Z$
for all $j\geq 3$, see \cite[eq. (62)]{KeatingSnaith:2000}.

For $\beta=2$ we know that $\sigma_{n,2}^2 \asymp \frac 12 \log n$, see \cite[eq. (45)]{KeatingSnaith:2000}.
Hence we have proved that for any even $k \geq 4$ and $Z_n = \frac{\Re \log(Z)}{\sigma_{n,2}}$ we have
$$
\big| \E (Z_n^{k}) - \E (N^{k}) \big|  \leq C_1(k)  \frac{1}{\log n}.
$$
%for $k$ being even, and
%$$
%\big| \E (Z_n^{k} )  \big|  \leq C_2(k)  \frac{1}{\bigl( \log n \bigr)^{1/2}}
%$$
%for $k$ being odd.

\section{Determinants of random matrix ensembles and random simplices}
In this section we consider random determinants of certain random matrix ensembles.

\subsection{Laguerre ensemble} \label{LE}
Let us start with the following prototype of a random matrix ensemble from mathematical statistics. 
The study of sample covariance matrices is fundamental in multivariate statistics. Typically, one thinks of $p(n)$ variables $y_k$ with each variable
measured or observed $n$ times. One is interested in analysing the covariance matrix $A^t \, A$, with $A$ being the $n \times p(n)$ matrix with $p(n) \leq n$, and entries $y_k^{(j)}$ for $j=1, \ldots, n$ and $k=1,\ldots, p(n)$. If $A$ is chosen to be a Gaussian matrix
over $\re$, $\cn$ or $\qn$, the distribution of the $p(n) \times p(n)$ random matrix $A^{\dagger} A$ is called {\it Laguerre}
real, complex or symplectic ensemble. Here $A^{\dagger}$ denotes the transpose, the Hermitian conjugate or the dual of $A$
accordingly, when $A$ is real, complex or quaternion. The eigenvalues $(\lambda_1, \ldots, \lambda_{p(n)})$ are real and non-negative and it is a well known fact that the joint density function on the set $(0, \infty)^{p(n)}$ is 
$$
\frac{1}{Z_{n,p(n),\beta}} \prod_{1 \leq j < k \leq p(n)} |\lambda_j - \lambda_k|^{\beta} \prod_{k=1}^{p(n)} \bigl( \lambda_k^{\frac{\beta}{2} (n - p(n) +1) - 1} e^{-\frac{\lambda_k}{2}} \bigr)
$$
for $\beta =1,2,4$ respectively, see for example \cite[Proposition 3.2.2]{Forrester:book}. 
Using Selberg integration from \cite[(17.6.5)]{Mehta:book}, we obtain
$$
Z_{n,p(n),\beta} = 2^{\frac{\beta}{2} n p(n) - p(n)} \prod_{k=1}^{p(n)} \frac{\Gamma(1+\frac{\beta}{2} k) \Gamma( \frac{\beta}{2}(n -p(n)) + \frac{\beta}{2} k)}{\Gamma(1 + \frac{\beta}{2})}.
$$
Using this Selberg formula, one obtains directly that
\begin{eqnarray*}
\E \biggl[ \biggl( \det W_{n,p(n)}^{L, \beta} \biggr)^z \biggr] & = &2^{p(n) z} \prod_{k=1}^{p(n)} \frac{\Gamma\bigl(
\frac{\beta}{2} (n-p(n)+k) +z \bigr)}{\Gamma\bigl(\frac{\beta}{2} (n -p(n) +k)\bigr)} \\
&=& 2^{p(n) z} \prod_{k=1+n-p(n)}^{n} 
\frac{\Gamma\bigl(\frac{\beta}{2} k +z \bigr)}{\Gamma\bigl(\frac{\beta}{2} k\bigr)},
\end{eqnarray*}
where $W_{n,p(n)}^{L, \beta}$ denotes the $\beta$-Laguerre distributed random matrix of dimension  $p(n) \times p(n)$. This object is called the {\it Mellin transform}
of the determinant, which is defined for any $z \in \cn$ with ${\operatorname{Re}}(z) > -\frac{\beta}{2}$.

\noindent
We introduce the notion
\begin{equation} \label{L}
L(p,l,\alpha;z) = \log \biggl( \prod_{k=1}^p \frac{\Gamma(\alpha(k+l) + z)}{\Gamma( \alpha(k+l))} \biggr) ,
\end{equation}
with $p, l \geq 1$ and $z \in \cn$ with $\operatorname{Re}(z) > - \alpha$ and $\alpha \in \re$, and obtain
\begin{equation*} \label{MellinL1}
\log \E \biggl[ \exp \bigl( z \log \bigl( \det W_{n,p(n)}^{L, \beta} \bigr) \bigr) \biggr] = z p(n) \log 2 + L(p(n), n-p(n), \beta/2;z).
\end{equation*}
It follows that

 \begin{equation} \label{MellinL}
 \Gamma_j \big( \log \det W_{n,p(n)}^{L, \beta} \bigr) = \frac{d^j}{d z^j} 
 L(p(n), n-p(n), \beta/2;z)\bigg|_{z=0} + 1_{\{j=1\}} p(n) \log 2.
 \end{equation}
 
 In the case $p(n)=n$ of $n \times n$ matrices, asymptotic expansions of \eqref{MellinL} have been considered in \cite[Theorem 5.1]{Borgoetal:2017}. 
From a point of view of mathematical statistics, the number of variables $p(n)$ and the number
of measurements or observations $n$ are typically different. 
In \cite{EK:2017} asymptotic expansions have been developed for $n-p(n)$ equal to a constant $c>0$, or $n-p(n)$ is growing at a certain rate with $n$, as well as the case of a fixed number of variables $p$. 
A good overview of results for $\beta$-Laguerre ensembles is \cite{Bai:book} and \cite{Forrester:book}. 
In \cite{Jonsson:1982} one can find a very early result: the author proved a central limit theorem for $\det W_{n,n}^{L, 1}$, which is
$$
\frac{\log \det W_{n,n}^{L, 1} + n + \frac 12 \log n}{\sqrt{2 \log n}} \to N(0,1),
$$
where $N(0,1)$ denotes the standard Gaussian distribution. 

Our aim is to analyse the asymptotic behaviour of the first and second cumulant, and to bound higher order cumulants. With respect to random determinants
of random matrix ensembles, this goes back to \cite{DelannayLeCaer:2000}. For further details see \cite{DoeringEichelsbacher:2012b}.
In \cite{GKT:2018} the results of \cite{DoeringEichelsbacher:2012b} were applied to study volumes of random simplices.
 
 From now on we only consider the case $\beta=1$. For $\beta \not= 1$ the asymptotic behaviour (in $n$ and $p(n)$) of all cumulants of $\det W_{n,p(n)}^{L, \beta}$ only
 differs by some constants depending on $\beta$.
 
 The digamma function is defined as $\psi(z) = \psi^{(0)}(z) := \frac{d}{dz} \log \Gamma (z)$, and the polygamma functions
 $$
 \psi^{(j)}(z) := \frac{d^j}{dz^j} \psi(z) = \frac{d^{j+1}}{dz^{j+1}} \log \Gamma(z), \quad j \in \N.
 $$
 First we analyse the expectation of $\det W_{n,p(n)}^{L, \beta}$. For $j=1$, we have
 $$
 \frac{d}{dz} L(p(n),n-p(n), \frac{1}{2};z) \bigg|_{z=0} = \sum_{k=1}^{p(n)} \psi \bigl( \frac{1}{2} (k + n-p(n)) \bigr) = \sum_{k=1}^n \psi \bigl( \frac{k}{2}  \bigr) - \sum_{k=1}^{n-p(n)} \psi \bigl( \frac{k}{2}  \bigr).
 $$
 As $n \to \infty$, one has $\sum_{k=1}^n \psi \bigl( \frac{k}{2}  \bigr) \sim n \log n$, see for example \cite[relation (2.10) and (2.19)]{DoeringEichelsbacher:2012b}.
 Hence 
\begin{equation*}
\E \big( \log \det W_{n,p(n)}^{L, 1} \bigr) \sim \left\{ \begin{array}{ll}
n \log n  + p(n) \log 2& \text{for }n-p(n)=o(n)\\
p(n) \log (2n) & \text{for }  p(n) = o(n)\\ 
c\, n \log (2n) & \text{for } p(n) \sim c \,n \text{ for some } c \in (0,1).
\end{array}
\right.
\end{equation*}
Next we analyse the variance of $\log \det W_{n,p(n)}^{L, \beta}$. We obtain
 $$
 \frac{d^2}{dz^2} L(p(n),n-p(n), \frac{1}{2};z) \bigg|_{z=0} = \sum_{k=1}^{p(n)} \psi^{(1)} \bigl( \frac{1}{2} (k + n-p(n)) \bigr).
 $$
We collect some asymptotic relations and bounds for polygamma functions.

\begin{lemma}
Let $j \in \N$. Then as $|z| \to \infty$ in $| \text{arg } z | < \pi - \varepsilon$, one has
\begin{equation} \label{v1}
\psi^{(j)}(z) = (-1)^{j-1} \frac{(j-1)!}{z^j} + {\mathcal O} \bigl( \frac{1}{z^{j+1}} \bigr),
\end{equation}
and for all $z > 0$,
\begin{equation} \label{v2}
|\psi^{(j)}(z) | \leq \frac{(j-1)!}{z^j} + \frac{j!}{z^{j+1}}.
\end{equation}
Moreover we have
\begin{equation} \label{v3}
\sum_{k=1}^n \psi^{(1)} \bigl( \frac k2 \bigr) = 2 \log n + c +o(1)
\end{equation}
with an explicit constant $c=2(\gamma +1+ \frac{\pi^2}{8})$ with the Euler-Mascheroni constant $\gamma$.
\end{lemma}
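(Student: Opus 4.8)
The plan is to treat the three statements separately, since they are of different natures: \eqref{v1} is an asymptotic expansion, \eqref{v2} is a pointwise bound valid for all $z>0$, and \eqref{v3} is a precise asymptotic evaluation of a partial sum, where the genuine work lies.

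For \eqref{v1}, I would start from the classical asymptotic expansion of the digamma function,
$$
\psi(z) = \log z - \frac{1}{2z} - \sum_{m \geq 1} \frac{B_{2m}}{2m\, z^{2m}},
$$
valid as $|z| \to \infty$ in $|\arg z| < \pi - \varepsilon$, and differentiate $j$ times term by term, differentiation of asymptotic expansions in a sector being legitimate. Since $\frac{d^j}{dz^j}\log z = (-1)^{j-1}(j-1)!\, z^{-j}$ while every subsequent term contributes a power $z^{-j-1}$ or smaller, the leading term is exactly $(-1)^{j-1}(j-1)!/z^j$ and the remainder is $\mathcal{O}(z^{-j-1})$, which is the claim.

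For \eqref{v2}, I would use the integral representation of the polygamma function already recorded in the paper, namely $\psi^{(j)}(z) = (-1)^{j+1}\int_0^\infty \frac{t^j e^{-zt}}{1-e^{-t}}\,dt$, so that for $z>0$ its modulus equals $\int_0^\infty \frac{t^j e^{-zt}}{1-e^{-t}}\,dt$. The key elementary inequality is $\frac{1}{1-e^{-t}} \leq \frac1t + 1$ for $t>0$, which is equivalent to $(1+t)e^{-t}\leq 1$ and follows since $(1+t)e^{-t}$ is nonincreasing with value $1$ at $t=0$. Inserting this bound splits the integral as $\int_0^\infty t^{j-1}e^{-zt}\,dt + \int_0^\infty t^{j}e^{-zt}\,dt = \frac{(j-1)!}{z^j} + \frac{j!}{z^{j+1}}$, which is precisely \eqref{v2}.

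The substantive part is \eqref{v3}. I would split the sum according to the parity of $k$, writing the even indices $k=2m$ as $\psi^{(1)}(m)$ and the odd indices $k=2m-1$ as $\psi^{(1)}(m-\tfrac12)$. For each family I would telescope: using the recurrence $\psi^{(1)}(z+1)=\psi^{(1)}(z)-z^{-2}$ one checks that $F(z)=(z-1)\psi^{(1)}(z)+\psi(z)$ is a discrete antiderivative of $\psi^{(1)}$ on the integers, and $G(z)=(z-\tfrac32)\psi^{(1)}(z-\tfrac12)+\psi(z-\tfrac12)$ on the shifted lattice. The telescoped sums then reduce to boundary terms: at the top end the combination $n\psi^{(1)}(n+1)$ and its half-integer analogue each tend to $1$ by \eqref{v1}, while the digamma part contributes the logarithm; at the bottom end the special values $\psi(1)=-\gamma$, $\psi(\tfrac12)=-\gamma-2\log 2$ and $\psi^{(1)}(\tfrac12)=\pi^2/2$ enter. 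Collecting the even and odd contributions, the $\pm\log 2$ terms cancel, the two logarithms combine to $2\log n$, and the constants assemble into $2\gamma + 2 + \tfrac{\pi^2}{4} = 2(\gamma+1+\tfrac{\pi^2}{8})$. The main obstacle I anticipate is not the leading $2\log n$ behaviour, which is immediate from $\psi^{(1)}(z)\sim 1/z$, but the exact determination of $c$: one must keep both boundary contributions and feed in the correct special values of $\psi$ and $\psi^{(1)}$ at $\tfrac12$, while controlling all error terms as $o(1)$ through \eqref{v1} and \eqref{v2}.
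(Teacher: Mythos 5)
Your proposal is correct, and in two of the three parts it follows a genuinely different route from the paper. For \eqref{v1} both you and the authors ultimately rely on the standard asymptotic expansion of the polygamma functions (the paper simply cites Abramowitz--Stegun, pp.~259--260; you rederive it by differentiating the expansion of $\psi$ in a sector, which is legitimate for analytic functions). For \eqref{v2} the paper works from the Weierstrass product: differentiating $-\log\Gamma$ gives the partial-fraction series $\psi^{(j)}(z)=(-1)^{j+1}j!\sum_{k\ge 0}(z+k)^{-(j+1)}$, whose $k=0$ term yields $j!/z^{j+1}$ and whose tail is compared to $j!\int_z^\infty x^{-(j+1)}dx=(j-1)!/z^j$; you instead use the Laplace-type integral representation together with the elementary bound $(1-e^{-t})^{-1}\le t^{-1}+1$, and the two Gamma integrals give the same two terms. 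Both arguments are clean; the paper's series representation has the side benefit of being reused immediately afterwards (equation \eqref{decrease} is invoked later to show that $|\psi^{(j-1)}|$ is decreasing), whereas your integral-representation route is arguably the shorter path to \eqref{v2} alone. For \eqref{v3} the paper gives no argument at all, deferring to relations (2.14) and (2.21) of D\"oring--Eichelsbacher; your telescoping proof via the discrete antiderivative $F(z)=(z-1)\psi^{(1)}(z)+\psi(z)$ on the integer and half-integer lattices is self-contained, and your bookkeeping checks out: the boundary terms $M\psi^{(1)}(M+1)\to 1$ contribute $2$, the digamma terms contribute $2\log n$ with the $\log 2$'s cancelling, and $\tfrac12\psi^{(1)}(\tfrac12)=\pi^2/4$ together with $\psi(1)=-\gamma$, $\psi(\tfrac12)=-\gamma-2\log 2$ assembles exactly to $c=2(\gamma+1+\pi^2/8)$. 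So your write-up is, if anything, more complete than the paper's on the third point.
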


\begin{proof}
The first asymptotic relation can be found in \cite{AbramowitzStegun:1964}, pp. 259-260. The representation of $\Gamma(z)^{-1}$ due to Weiserstrass is 
$\frac{1}{\Gamma(z)} = z e^{\gamma z} \prod_{k=1}^{\infty} \bigl( 1 + \frac zk \bigr) e^{-\frac zk}$. Differentiating $-\log \Gamma(z)$ leads to
$$
\psi(z) = - \gamma - \frac 1z + \sum_{k=1}^{\infty} \biggl( \frac 1k - \frac{1}{z+k} \biggr) = - \gamma + \sum_{n=0}^{\infty} \biggl( \frac{1}{n+1} - \frac{1}{z+n} \biggr).
$$
Therefore one obtains
\begin{equation} \label{decrease}
\psi^{(j)}(z) = (-1)^{j+1} j! \sum_{k=0}^{\infty} \frac{1}{(z+k)^{j+1}}.
\end{equation}
It follows that
$$
|\psi^{(j)}(z)| \leq \frac{j!}{z^{j+1}} + j! \int_z^{\infty} \frac{dx}{x^{j+1}} = \frac{j!}{z^{j+1}} + \frac{(j-1)!}{z^j},
$$
which is \eqref{v2}. The last asymptotic relation \eqref{v3} can be found in \cite[relations (2.14) and (2.21)]{DoeringEichelsbacher:2012b}.
\end{proof}

\noindent
With \eqref{v3} we obtain 
$$
\sum_{k=1}^{p(n)} \psi^{(1)} \bigl( \frac{1}{2} (k + n-p(n)) \bigr) = 2 \log n - 2 \log (n - p(n) +1) + {\mathcal O}(1) \sim 2 \log \frac{n}{n-p(n) +1}
$$
in the case $n-p(n) =o(n)$. If $p(n) =o(n)$, we apply \eqref{v1} to see that
$$
\sum_{k=1}^{p(n)} \psi^{(1)} \bigl( \frac{1}{2} (k + n-p(n)) \bigr) \sim 2  \frac{p(n)}{n}.
$$
Finally, with $p(n) \sim c \, n$, we apply \eqref{v3} to see
$$
\sum_{k=1}^{p(n)} \psi^{(1)} \bigl( \frac{1}{2} (k + n-p(n)) \bigr) = 2 \log n +c - 2 \log (n-p(n)) - c +o(1) = \log \frac{1}{1-c} +o(1).
$$
Hence 
\begin{equation} \label{variance}
\V \big( \log \det W_{n,p(n)}^{L, 1} \bigr) \sim \left\{ \begin{array}{ll}
2 \log \frac{n}{n-p(n)+1}  & \text{for }n-p(n)=o(n)\\
2 \frac{p(n)}{n} & \text{for }  p(n) = o(n)\\ 
2 \log \frac{1}{1-c} & \text{for } p(n) \sim c \,n \text{ for some } c \in (0,1).
\end{array}
\right.
\end{equation}

Finally we will bound the higher order cumulants. To this end we will combine results of \cite{DoeringEichelsbacher:2012b} and \cite{GKT:2018}.
By \eqref{decrease}, $| \psi^{(j-1)}(\cdot)|$ is decreasing, and therefore for $j \geq 3$:
$$
\big| \Gamma_j(\log \det W_{n,p(n)}^{L, 1}) \big| = \bigg| \sum_{k=1}^{p(n)} \psi^{(j-1)} \bigl( \frac{1}{2} (k + n-p(n)) \bigr) \bigg| \leq p(n)
 \bigg| \psi^{(j-1)} \bigl( \frac{1}{2} (1 + n-p(n)) \bigr) \bigg|.
$$
With \eqref{v2} we have $| \psi^{(j-1)}(z)| \leq 2 (j-1)! z^{1-m}$, $z \geq 1$. Hence
$$
\big| \Gamma_j(\log \det W_{n,p(n)}^{L, 1}) \big| \leq 2^j d^{j-1} p(n) (j-1)! n^{1-j},
$$
where $d$ is a constant such that $\frac{n-p(n)+1}{2} > \frac nd$, which is possible to choose in the cases $p(n)=o(n)$ and $p(n) \sim c\, n$.
The constant might depend on $c$, but is does not depend on $n$ or $p(n)$.
There is a very general bound for the higher order cumulants, which is valid for every choice of $p(n)$. For $j \geq 3$ we have
$$
\big| \Gamma_j(\log \det W_{n,p(n)}^{L, 1}) \big| = \bigg| \sum_{k=1}^{p(n)} \psi^{(j-1)} \bigl( \frac{1}{2} (k + n-p(n)) \bigr) \bigg| \leq \sum_{k=1}^n 
 \bigg| \psi^{(j-1)} \bigl( \frac{k}{2} \bigr) \bigg|. 
$$
With \eqref{v2} it follows that for any $j \geq 3$
$$
\big| \Gamma_j(\log \det W_{n,p(n)}^{L, 1}) \big| \leq 2^j \sum_{k \geq 1} \biggl( \frac{(j-1)!}{k^j} + \frac{(j-1)!}{4 k^{j-1}} \biggr) \leq 2^j \bigl( \zeta(3) + \frac 14 \zeta(2) \bigr)
 (j-1)! < 2^{j+1} (j-1)!,
$$
using $(j-2)! \leq \frac 12 (j-1)!$, and where $\zeta$ denotes the Riemann zeta function. Summarizing we obtain
\begin{equation} \label{cumL}
\big| \Gamma_j(\log \det W_{n,p(n)}^{L, 1}) \big| \leq \left\{ \begin{array}{ll}
2^j d^{j-1} p(n) (j-1)! n^{1-j} & \text{for } p(n) = o(n) \text{ or } p(n) \sim c \,n,\\
2^{j+1} (j-1)! & \text{for arbitrary }  p(n).
\end{array}
\right.
\end{equation}

Now we consider
$$
Z_{n,p(n)}^L := \frac{\log \det W_{n,p(n)}^{L, 1} - \E (\log \det W_{n,p(n)}^{L, 1})}{\sqrt{ \V(\log \det W_{n,p(n)}^{L, 1})}},
$$
and with \eqref{variance} and \eqref{cumL}, we get, for some constants $C_1(j)$ and $C_2(j)$, that
\begin{equation} \label{cumLZ}
\big| \Gamma_j(Z_{n,p(n)}^L) \big| \leq \left\{ \begin{array}{ll}
C_1(j) (j-1)! \frac{1}{\bigl( \sqrt{p(n) n}\bigr)^{j-2}}  & \text{for } p(n) = o(n) \text{ or } p(n) \sim c \,n,\\
C_2(j) (j-1)! \frac{1}{ \bigl( \sqrt{ \log \frac{n}{n-p(n) +1}} \bigr)^j} & \text{for }  n-p(n)=o(n).
\end{array}
\right.
\end{equation}
Now we can apply Corollary \ref{classical} to obtain:

\begin{theorem} \label{theoremRM}
For the $\log$-determinant of the Laguerre ensemble with $\beta=1$, we obtain the bounds
$$
\big| \E \bigl( (Z_{n,p(n)}^L)^{k} \bigl) - \E (N^{k}) \big|  \leq C_1(k)  \frac{1}{p(n) \, n}
$$
for $k$ being even and $p(n) =o(n)$ or $p(n) \sim c \, n$ for a fixed $c \in (0,1)$, and
$$
\big| \E \bigl( (Z_{n,p(n)}^L)^{k} \bigl) - \E (N^{k}) \big|  \leq C_1(k)  \frac{1}{ \log \frac{n}{n-p(n) +1}}
$$
for $k$ being even and $n-p(n) =o(n)$, including the case $n=p(n)$.
\end{theorem}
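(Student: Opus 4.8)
The plan is to read both inequalities off Corollary \ref{classical} directly, since the normalized cumulant estimates \eqref{cumLZ} already place $Z_{n,p(n)}^L$ in the setting of that corollary with $\gamma = 0$. All that remains is to identify the correct $\Delta_n$ in each regime, check that $\Delta_n \to \infty$, and extract the leading term of the resulting sum for even $k$.

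First I would handle the regimes $p(n) = o(n)$ and $p(n) \sim c\,n$ simultaneously by setting $\Delta_n := \sqrt{p(n)\,n}$. The first line of \eqref{cumLZ} then reads
$$
\big| \Gamma_j(Z_{n,p(n)}^L) \big| \leq C_1(j)\,(j-1)!\,\frac{1}{\Delta_n^{\,j-2}} \leq \frac{(j!)\,C_1(j)}{\Delta_n^{\,j-2}},
$$
using $(j-1)! \leq j!$, which is exactly condition \eqref{eqcumulants2} with $\gamma = 0$ and $\widetilde{C}_j = C_1(j)$. Since $n \to \infty$ and $p(n) \geq 1$, we have $\Delta_n \to \infty$, so Corollary \ref{classical} applies and its bound is dominated by the $j = \frac k2 - 1$ term, which is a constant multiple of $\Delta_n^{-2} = (p(n)\,n)^{-1}$. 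Absorbing the lower-order terms into the same constant gives the first claimed inequality, with $C_1(k)$ depending only on $k$ (and, through the $C_1(j)$, on the fixed $c$).

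For the regime $n - p(n) = o(n)$ I would instead set $\Delta_n := \sqrt{L_n}$ with $L_n := \log\frac{n}{n-p(n)+1}$. Because $n - p(n) = o(n)$ forces $\frac{n}{n-p(n)+1} \to \infty$, we get $L_n \to \infty$, hence $\Delta_n \to \infty$ and, for all large $n$, $L_n \geq 1$. The latter yields $L_n^{-j/2} \leq L_n^{-(j-2)/2} = \Delta_n^{-(j-2)}$, so the second line of \eqref{cumLZ} becomes $|\Gamma_j(Z_{n,p(n)}^L)| \leq C_2(j)\,(j-1)!\,\Delta_n^{-(j-2)}$, again matching \eqref{eqcumulants2} with $\gamma = 0$ and $\widetilde{C}_j = C_2(j)$. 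Corollary \ref{classical} and retention of the leading $j = \frac k2 - 1$ term then give a bound proportional to $\Delta_n^{-2} = L_n^{-1} = \bigl(\log\frac{n}{n-p(n)+1}\bigr)^{-1}$. The degenerate case $n = p(n)$ is included, since there $L_n = \log n \to \infty$.

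The genuine difficulty of the argument lies entirely upstream, in deriving \eqref{cumLZ}: passing from the polygamma representation \eqref{decrease} and the estimates \eqref{v1}--\eqref{v3} first to the cumulant bounds \eqref{cumL} and then, via the variance asymptotics \eqref{variance}, to the normalized bounds \eqref{cumLZ}. Once \eqref{cumLZ} is in hand, the present step is purely formal. The only mild subtlety to watch is that in the second regime the raw decay rate $L_n^{-j/2}$ is slightly faster in $j$ than the $\Delta_n^{-(j-2)}$ demanded by \eqref{eqcumulants2}, so one absorbs the surplus factor $L_n^{-1}$ using $L_n \geq 1$ rather than attempting to match the exponents exactly.
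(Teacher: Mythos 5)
Your proposal is correct and follows essentially the same route as the paper, which simply invokes Corollary \ref{classical} with the normalized cumulant bounds \eqref{cumLZ}, taking $\Delta_n = \sqrt{p(n)\,n}$ in the first regime and $\Delta_n = \bigl(\log\frac{n}{n-p(n)+1}\bigr)^{1/2}$ in the second, and keeping the leading $j = \frac k2 - 1$ term. Your explicit handling of the exponent mismatch ($\Delta_n^{-j}$ versus $\Delta_n^{-(j-2)}$) in the second regime is a detail the paper leaves implicit, and it is resolved correctly.
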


\subsection{Further random matrix ensembles}
In \cite{EK:2017} it was observed that many other random matrix models can be analysed knowing the behaviour of $L$ in \eqref{L}.

In Section 2.2 of \cite{EK:2017}, it was observed that for the Jacobi ensemble
\begin{equation*} %\label{MellinJ}
\log \E \biggl[ \biggl( \det W_{p(n),n_1,n_2}^{J, \beta} \biggr)^z \biggr]  =L(p(n), n_1-p(n), \beta/2;z) - L(p(n), n_1+n_2-p(n), \beta/2;z),
\end{equation*}
where $W_{p(n),n_1,n_2}^{J, \beta}$ denotes the $\beta$-Jacobi distributed random matrix of dimension  $p(n) \times p(n)$.
Hence bounds on cumulants can be obtained starting with
$$
\Gamma_j \big( \log \det W_{p(n),n_1,n_2}^{J, \beta} \bigr) = \frac{d^j}{d z^j} 
\bigl( L(p(n), n_1-p(n), \beta/2;z) - L(p(n), n_1+n_2-p(n), \beta/2;z) \bigr) \bigg|_{z=0}.
$$

In \cite[Section 2.3]{EK:2017} for the Ginibre ensemble (starting with an arbitrary $n \times n$ matrix $A$ whose entries are independent real or complex Gaussian random variables with mean zero and variance one), it was observed that 
\begin{equation*} %\label{MellinG}
\log \E \biggl[ \biggl( \det W_{n}^{G, \beta} \biggr)^z \biggr] = \frac{nz}{2} \log \bigl( \frac{2}{\beta} \bigr)+ L(n,0,\beta/2;z).
\end{equation*}
Hence bounds on cumulants can be obtained starting with
$$
\Gamma_j \big( \log \det  W_{n}^{G, \beta}\bigr) = \frac{d^j}{d z^j} 
L(n, 0, \beta/2;z) \bigg|_{z=0} + 1_{\{j=1\}} \frac n2 \log \frac{2}{\beta}.
$$
In \cite[Section 2.4 and 2.5]{EK:2017} ten more random matrix models for mesoscopic normal-super\-con\-ducting structures were considered.
As we can see from \cite[(2.9) and (2.19)]{EK:2017}, all models can be analysed considering the $L$ in \eqref{L}.

\subsection{Random simplices}
If for $p(n) \leq n$, $X_1, \ldots, X_{p(n)+1}$ are independent random points in $\re^n$ which are distributed according to a multivariate Gaussian distribution
with density $f(|x|) = (2 \pi)^{-n/2} \exp( - \frac 12 |x|^2)$, $x \in \re^n$, we denote by $VP_{n,p(n)}$ the $p(n)$-dimensional volume of the {\it parallelotope} spanned by the points $X_1, \ldots, X_{p(n)}$. This is the determinant of the corresponding Gram matrix. It is known, see \cite{Mathai:1999}, that for all $m \geq 0$, the moments of order $2m$ of the volume fulfil
$$
\log \E \bigl( (VP_{n,p(n)})^{2m} \bigr) = m p(n) \log 2 + \log \prod_{k=1}^{p(n)} \frac{\Gamma\biggl(
\frac{1}{2} (n-p(n)+k) +m \biggr)}{\Gamma\biggl(\frac{1}{2} (n -p(n) +k)\biggr)}.
$$
The formula is a consequence of the so-called Blaschke-Petkantschin formula from integral geometry. Hence with \eqref{L}, we will study the asymptotics of
\begin{equation} \label{MellinPa1}
\log \E \bigl( (VP_{n,p(n)})^{z} \bigr) =  \frac z2 p(n) \log 2 + L \bigl( p(n), n-p(n), 1/2; z/2 \bigr),
\end{equation}
which is exactly the same as studying the asymptotic behaviour of the log-determinant of a Laguerre ensemble in the case $\beta=1$ for $z/2$ instead of $z$, see \eqref{MellinL}. Interestingly enough, the application of the Blaschke-Petkantschin formula is an alternative proof of the moment identity \eqref{MellinL}, which
in random matrix theory is proved with the help of Selberg integrals. We obtain
$$
\Gamma_j \bigl( \log \E \bigl( (VP_{n,p(n)})^{z} \bigr) \bigr) =  \frac{d^j}{d z^j} L \bigl( p(n), n-p(n), 1/2; z/2 \bigr) \bigg|_{z=0} + 1_{\{j=1\}} \frac{p(n)}{2} \log 2.
$$
The only difference to our results in Subsection \ref{LE} is that we have to use the identity
$$
\frac{d^j}{d z^j} L \bigl( p(n), n-p(n), 1/2; z/2 \bigr) \bigg|_{z=0} =  \frac{1}{2^j} \sum_{k=1}^{p(n)} \psi^{(j-1)} \bigl( \frac{1}{2} (k + n-p(n)) \bigr).
$$
Therefore we only have to deal with the pre-factor $\frac{1}{2^j}$, which only changes the constants $C_1(j)$ and $C_2(j)$ in Theorem \ref{theoremRM}.

\noindent
If we denote by $VS_{n,p(n)}$  the $p(n)$-dimensional volume of the {\it simplex} with vertices $X_1, \ldots, \allowbreak X_{p(n)+1}$, the moment formulas are very similar.
The following formulas were proved using the affine Blaschke-Petkantschin formula, see \cite{Miles:1971} and \cite{Chu:1993}. In the Gaussian model one obtains
\begin{equation*} \label{MellinSim1}
\log \E \bigl( (p(n)! \, VS_{n,p(n)})^{z} \bigr) =  \frac z2 \log (p(n)+1) + \log \E \bigl( (VP_{n,p(n)})^{z} \bigr),
\end{equation*}
where $\log \E \bigl( (VP_{n,p(n)})^{z} \bigr)$ is defined in \eqref{MellinPa1}. Again we can prove the same bounds as in Theorem \ref{theoremRM}.

Finally, in \cite{Mathai:2001}, the author studied the moments of order $2m$ of $VP_{n,p(n)}$ and of $V S_{n,p(n)}$, respectively, if the random points are distributed according to three other distributions, which are called the Beta model, the Beta prime model and the spherical model. All these models can be considered
in the same way.  Cumulant bounds can be found in \cite{GKT:2018}, given case by case. The order of the bounds are the same and hence
one can observe the same results as in Theorem \ref{theoremRM}.

\newcommand{\SortNoop}[1]{}\def\cprime{$'$} \def\cprime{$'$}
  \def\polhk#1{\setbox0=\hbox{#1}{\ooalign{\hidewidth
  \lower1.5ex\hbox{`}\hidewidth\crcr\unhbox0}}} \def\cprime{$'$}
\providecommand{\bysame}{\leavevmode\hbox to3em{\hrulefill}\thinspace}
\providecommand{\MR}{\relax\ifhmode\unskip\space\fi MR }
% \MRhref is called by the amsart/book/proc definition of \MR.
\providecommand{\MRhref}[2]{%
  \href{http://www.ams.org/mathscinet-getitem?mr=#1}{#2}
}
\providecommand{\href}[2]{#2}

%\bibliographystyle{amsplain}
%\bibliography{1.03.2017}
\end{document}